\newcommand{\blind}{1}
\def\singlespace{\def\baselinestretch{1}\@normalsize}
\theoremstyle{plain}
\theoremstyle{remark}
\newcommand{\bX}{\mathbf{X}}
\newcommand{\bY}{\mathbf{Y}}
\newcommand{\bx}{\mathbf{x}}
\newcommand{\bz}{\mathbf{z}}
\newcommand{\by}{\mathbf{y}}
\newcommand{\E}{{\rm E}}
\newcommand{\bR}{\mathbf{R}}
\newcommand{\bG}{\mathbf{G}}
\newcommand{\be}{\mathbf{e}}
\newcommand{\bv}{\mathbf{v}}
\newcommand{\bM}{\mathbf{M}}
\newcommand{\bGamma}{\bm{\Gamma}}
\newcommand{\bSig}{\bm{\Sigma}}
\newcommand{\bB}{{\bf B}}
\newcommand{\bS}{{\bf S}}
\newcommand{\bU}{{\bf U}}
\newcommand{\bV}{{\bf V}}
\newcommand{\bI}{{\bf I}}
\newcommand{\diag}{{\rm diag}}
\renewcommand{\(}{\left(}
\renewcommand{\)}{\right)}
\numberwithin{equation}{section}  
\newtheorem{thm}{Theorem}[section]
\newtheorem{rem}{Remark}[section]
\newtheorem{cor}{Corollary}[section]
\begin{document}

\def\spacingset#1{\renewcommand{\baselinestretch}%
{#1}\small\normalsize} \spacingset{1}

\if1\blind
{
\title{Inference for Spiked Eigenstructure under Generalized Covariance and Correlation Models}
  \author{Yanqing Yin \\
    School of Statistics and Data Science \\
			Nanjing Audit University
    \\
    Wang Zhou\\
    Department of Statistics and Data Science\\  National University of Singapore} 
  \maketitle
} \fi

\if0\blind
{
  \bigskip
  \bigskip
  \bigskip
  \begin{center}
    {\LARGE\bf  Inference for Spiked Eigenstructure under Generalized Covariance and Correlation Models}
\end{center}
  \medskip
} \fi


\bigskip

\begin{abstract}
In high-dimensional principal component analysis, important inferential targets include both leading spikes and the associated principal eigenspaces. Such problems arise naturally in high-dimensional factor models, where leading principal directions are interpreted as dominant loading directions and spike magnitudes reflect the strength of the corresponding common factors. We study inference based on the sample covariance matrix $\bS$ and the sample correlation matrix $\widehat{\bR}$ under generalized spiked models with arbitrary bulk spectrum. We establish almost sure limits and central limit theorems for spiked sample eigenvalues, and derive asymptotic distributions for functionals of sample spiked eigenspaces. Building on this theory, we develop procedures for one-sample inference for benchmark principal directions and for two-sample comparison of leading spike strengths across populations. Even in the covariance setting, our results substantially extend the existing literature by allowing a non-identity bulk structure. A real-data analysis on stock returns further illustrates the practical relevance of the proposed procedures, showing that covariance-based and correlation-based PCA can lead to markedly different conclusions.
\end{abstract}

\noindent%
{\it Keywords: sample covariance matrix, sample correlation matrix, principal component analysis, spiked eigenstructure, principal eigenspace inference}
\vfill

\newpage
\spacingset{1.9}

\section{Introduction}
\subsection{Spiked eigenstructure inference in high-dimensional PCA}
Principal component analysis (PCA) is a fundamental tool in multivariate statistics for dimension reduction, feature extraction, and exploratory data analysis. In modern applications, however, statistical interest often extends beyond the leading eigenvalues themselves to the associated principal eigenspaces and other functionals of the spiked eigenstructure. These objects play a central role in problems such as signal extraction, factor modeling, and low-dimensional representation of high-dimensional data.

Let $\bY=(\by_1,\ldots,\by_n)$ be a $p\times n$ data matrix, where $\by_1,\ldots,\by_n\in\mathbb{R}^p$ are independent observations from a $p$-dimensional population with covariance matrix $\bSig$. PCA is classically performed through the eigendecomposition of $\bSig$ and its sample analogue
\[
\bS=\frac{1}{n}\sum_{j=1}^n \by_j\by_j^\top=\frac{1}{n}\bY\bY^\top.
\]
The eigenvalues of $\bSig$ describe the strength of the principal components, while the associated eigenvectors determine their dominant projection directions.

In the high-dimensional regime, where both $p$ and $n$ grow and $p/n$ converges to a positive constant, the sample covariance matrix is no longer a consistent estimator of $\bSig$ in spectral norm, and the sample eigenstructure may differ substantially from its population counterpart. A natural framework for studying this phenomenon is the spiked model introduced by \citet{Johnstone01D}, in which a finite number of population eigenvalues are separated from the bulk spectrum. The phase transition for spiked sample eigenvalues was established by \citet{BaikS06E}, and a substantial literature has since been developed on the asymptotic behavior of spiked eigenvalues and eigenvectors; see, among others, \citet{BaikB05P,Paul07A,johnstone2009consistency,LedoitP10E,TonyCai2020,Jiang2021b,Bao2022,zhangzheng2022}.

Despite this progress, an important gap remains. In many inferential problems, the relevant statistical target is not the raw eigenvector itself, but rather a projector-type eigenspace functional or a spike-related quantity. Such projector-type functionals are free of sign ambiguity and remain meaningful in the presence of multiple spikes, while spike magnitudes directly quantify the strength of dominant factors or common dependence structures. A general asymptotic theory for these quantities is therefore essential for principled inference in high-dimensional PCA.

Beyond one-sample recovery questions, it is also natural in many applications to compare dominant spectral structure across different populations, groups, or time periods. Such two-sample questions arise, for example, when one wishes to assess whether the strength or geometry of the dominant dependence structure remains stable across regimes. This further highlights the need for an inferential framework that can accommodate both eigenspace-type targets and spike-related quantities in high dimensions.

In many modern applications, these questions also admit a natural interpretation through high-dimensional factor models, where leading eigenvectors correspond to dominant loading directions and spike magnitudes quantify the strength of the associated common factors. 

\subsection{Covariance PCA, correlation PCA, and normalization}

In practice, PCA is commonly performed either on the sample covariance matrix $\bS$ or on the sample correlation matrix
\[
\widehat{\bR}=\diag^{-1/2}(\bS)\bS\diag^{-1/2}(\bS).
\]
The former reflects the raw variation structure of the data, whereas the latter removes the influence of heterogeneous marginal scales and is therefore widely used when the variables are measured in different units or exhibit substantially different variances.

From a high-dimensional perspective, correlation-based PCA is substantially more delicate than covariance-based PCA. The reason is that $\widehat{\bR}$ depends nonlinearly on the diagonal entries of $\bS$, so the normalization step induces a nontrivial global dependence structure even when the underlying components are independent. As a consequence, although the first-order spectral behavior of $\widehat{\bR}$ can often be described through the corresponding population correlation matrix, its second-order behavior is much harder to analyze.

Existing works have shown that normalization may alter the fluctuation behavior of both eigenvalues and eigenvectors; see, for example, \citet{ElKaroui09C,GaoH17H,Yin,Yin2022a,Morales-Jimenez2021}. In particular, \citet{Morales-Jimenez2021} studied spiked sample correlation matrices under a special model with identity bulk structure and independent spiked and non-spiked parts, and showed that normalization leaves the first-order limits of the spiked eigenstructure unchanged while modifying its second-order behavior. These findings suggest that normalization has a genuinely nontrivial inferential effect in high dimensions.

This leads naturally to the following question: how does normalization affect principal direction and eigenspace inference under a generalized spiked model? The present paper addresses this question by studying the asymptotic behavior of the spiked eigenstructure of both $\bS$ and $\widehat{\bR}$ within a unified framework. Our goal is not only to study one-sample recovery and testing problems, but also to provide a basis for comparing dominant spectral structure across populations in covariance- and correlation-based PCA.

\subsection{Related work and the gap in the literature}

The asymptotic theory of spiked eigenvalues and eigenvectors has been extensively studied in the covariance setting. In particular, a substantial literature has established the limiting behavior of sample spiked eigenvalues, eigenvectors, and related projection statistics under various spiked covariance models. Nevertheless, much of the available theory is still developed under relatively restrictive bulk structures, most notably the identity bulk case or closely related simplified models. Compared with covariance-based PCA, the literature on correlation-based PCA remains much thinner, especially under generalized bulk spectrum and without idealized independence assumptions between the spiked and non-spiked parts.

Two limitations of the existing literature are particularly relevant for statistical inference. First, although projection-type statistics have been studied in the covariance setting, existing results are still largely tied to special bulk structures. For example, \citet{Bao2022} developed an elegant inferential theory for projections of sample spiked eigenvectors, but their analysis is carried out under a model with identity bulk. By contrast, our results allow for a general non-identity bulk spectrum, which is important for many realistic high-dimensional covariance structures. Second, in the correlation setting, available results are much more limited under generalized bulk models. In particular, a general inferential theory for projector-type eigenspace functionals and spike-related quantities under correlation-based PCA is still lacking.

The present paper addresses both limitations. We develop a unified asymptotic theory for spiked covariance and correlation PCA under generalized bulk structure, with emphasis on inferential targets that are directly meaningful in applications. This perspective is especially relevant in high-dimensional factor analysis, where principal directions represent dominant loading directions and spike magnitudes quantify the strength of the corresponding common factors. There are also related results in stronger signal regimes; see, for example, \citet{Fan2026test}. However, that line of work focuses on much stronger spikes, typically of order $p$, whereas our theory applies to general separated spikes outside the bulk and therefore covers a substantially broader range of signal strengths.

\subsection{Our contributions and organization}

Our contributions are fourfold.

First, we develop a unified framework for spiked eigenstructure inference based on both the sample covariance matrix $\bS$ and the sample correlation matrix $\widehat{\bR}$ under generalized spiked models with arbitrary bulk spectrum. On the covariance side, this extends existing inferential results beyond the identity-bulk setting. On the correlation side, it provides a comparable framework for normalized data under general bulk structure.

Second, within this framework, we establish almost sure limits and central limit theorems for spiked sample eigenvalues, and derive asymptotic distributions for projector-type functionals of sample spiked eigenspaces. These results form the main theoretical foundation for inference on spiked eigenstructure in high dimensions.

Third, we translate this asymptotic theory into feasible statistical procedures. In particular, we develop one-sample inference for benchmark principal directions and two-sample inference for comparing leading spike strengths across populations.

Fourth, we clarify the inferential effect of normalization in high-dimensional PCA. We show that covariance-based and correlation-based PCA may share the same first-order inferential targets, while exhibiting substantially different second-order fluctuation behavior. Thus, normalization has a direct and essential impact on statistical inference, rather than merely serving as a preprocessing step.

The remainder of the paper is organized as follows. Section \ref{model} introduces the model and inferential targets, Section \ref{sectheory} develops the asymptotic theory, Section \ref{secapp} presents the statistical applications, and Section \ref{secsimu} reports numerical results. Technical proofs and additional supporting materials are deferred to the supplementary material.

\section{Model setup and inferential targets}\label{model}

This section introduces the generalized spiked covariance and correlation models studied throughout the paper and defines the main inferential targets.

\subsection{Generalized spiked covariance and correlation models}

Let $\bY=\bGamma \bX$, where
$
\bX=(x_{ij})=(\bx_1,\ldots,\bx_n)
$
has i.i.d.\ entries with mean $0$ and variance $1$. The population covariance matrix is
$
\bSig=\bGamma\bGamma^\top.
$
Define
$
\bG=[\diag(\bSig)]^{-1/2}\bGamma,
$
so that the population correlation matrix is
$
\bR=\bG\bG^\top.
$

The corresponding covariance- and correlation-based PCA procedures are based on
\[
\bS=\frac{1}{n}\bY\bY^\top,\qquad
\widehat{\bR}=\diag^{-1/2}(\bS)\bS\diag^{-1/2}(\bS).
\]

Write the singular value decomposition of $\bG$ as
$
\bG=\bV\diag[\boldsymbol\Lambda_s^{1/2},\boldsymbol\Lambda_b^{1/2}]\bU^\top,
$
where $\boldsymbol\Lambda_s$ contains the spiked eigenvalues and $\boldsymbol\Lambda_b$ contains the bulk eigenvalues. Let
$
\alpha_1>\cdots>\alpha_K
$
be the distinct population spikes of $\bR$, with multiplicities $m_k$, $k=1,\ldots,K$, satisfying
$
\sum_{k=1}^K m_k=M,
$
where $M$ is fixed. Let $\bV_s$ and $\bV_b$ be the matrices formed by the first $M$ and the remaining $p-M$ columns of $\bV$, respectively. Similarly, let $\bU_s$ and $\bU_b$ be formed from $\bU$, and define
$
\bG_b=\bV_b\boldsymbol\Lambda_b^{1/2}\bU_b^\top.
$

Denote the eigenvalues and eigenvectors of $\widehat{\bR}$ by
\[
\lambda_1(\widehat{\bR}),\ldots,\lambda_p(\widehat{\bR}),
\qquad
\widehat z_1,\ldots,\widehat z_p,
\]
and those of $\bS$ by
\[
\lambda_1(\bS),\ldots,\lambda_p(\bS),
\qquad
\widehat z_{\bSig,1},\ldots,\widehat z_{\bSig,p}.
\]

Define
\[
\varphi_{y,H_R}(z)
=
z\left(1+\int \frac{yt}{z-t}\,dH_R(t)\right),
\qquad
\psi_{y,H_R}(z)
=
\frac{\varphi_{y,H_R}(z)}{z}.
\]
We shall write $\varphi(z)$ and $\psi(z)$ for simplicity whenever no confusion arises. Using the standard notation $f'$ to denote the derivative of any function $f$, we have
\[
\varphi_{y,H_R}'(z)
=
1-y\int \frac{t^2}{(z-t)^2}\,dH_R(t),\qquad
\varphi_{y,H_R}''(z)
=
2y\int \frac{t^2}{(z-t)^3}\,dH_R(t),
\]
and
\[
\varphi_{y,H_R}'''(z)
=
-6y\int \frac{t^2}{(z-t)^4}\,dH_R(t).
\]

We impose the following assumptions.

\begin{itemize}
\item {\bf A1 (Moment condition):} The variables $\{x_{ij}\}$ are i.i.d.\ real random variables satisfying
\[
\E(x_{11})=0,\qquad \E(x_{11}^2)=1,\qquad \E(x_{11}^4)=\nu_4+3<\infty.
\]

\item {\bf A2 (Bulk condition):} The bulk correlation matrix
$
\bR_b=\bG_b\bG_b^\top
$
is bounded in spectral norm. Its empirical spectral distribution $H_{R,n}$ converges weakly to a nondegenerate probability distribution $H_R$.

\item {\bf A3 (Comparable scenario):} As $n\to\infty$,
\[
y_n=\frac{p}{n}\to y\in(0,\infty).
\]

\item {\bf A4 (Separable condition):} The spikes $\alpha_1,\ldots,\alpha_K$ lie outside the support of $H_R$ and satisfy
\[
\varphi'_{y,H_R}(\alpha_k)>0,\qquad 1\le k\le K.
\]
Moreover, there exists a constant $d>0$, independent of $p$ and $n$, such that for any $1\le j\neq k\le K$,
\[
\left|\frac{\alpha_k}{\alpha_j}-1\right|>d.
\]
\end{itemize}

Assumptions {\bf A1--A4} will be used for the correlation matrix case. The corresponding covariance-side assumptions are defined analogously by replacing $\bG$ and $\bR$ with $\bGamma$ and $\bSig$, and replacing $H_R$ and $H_{R,n}$ with $H_\Sigma$ and $H_{\Sigma,n}$, respectively. When needed, we refer to them as {\bf A1--A4}$(\bSig)$. For the covariance model, let $H_{\Sigma,n}$ denote the empirical spectral distribution of the bulk covariance matrix and assume that $H_{\Sigma,n}\Rightarrow H_\Sigma$ weakly.

Under these models, our main inferential objects are the sample spiked eigenspaces and their projector-type functionals, together with the spiked sample eigenvalues used in the two-sample problem. Let $I_k$ denote the index set corresponding to the $m_k$ occurrences of the $k$th population spike, $k=1,\ldots,K$, and write
\[
\mathcal I=\{I_1,\ldots,I_K\}.
\]
For the population correlation matrix $\bR$, the eigenspace associated with $\alpha_k$ is spanned by $\{\bV_j:j\in I_k\}$, and the corresponding projector is
\[
\Pi_k=\sum_{j\in I_k}\bV_j\bV_j^\top.
\]
The sample projector based on $\widehat{\bR}$ is
\[
\widehat \Pi_k=\sum_{j\in I_k}\widehat z_j\widehat z_j^\top.
\]
Similarly, for the covariance matrix case, define
\[
\widehat \Pi_{\bSig,k}=\sum_{j\in I_k}\widehat z_{\bSig,j}\widehat z_{\bSig,j}^\top.
\]

For a prescribed unit vector $\mathcal P\in\mathbb R^p$, the corresponding projector-type statistics are
\[
T_{\bR,k}(\mathcal P)=\mathcal P^\top \widehat \Pi_k \mathcal P,
\qquad
T_{\bSig,k}(\mathcal P)=\mathcal P^\top \widehat \Pi_{\bSig,k} \mathcal P.
\]
In the simple-spike case, these reduce to
\[
T_{\bR,k}(\mathcal P)=\big(\mathcal P^\top \widehat z_k\big)^2,
\qquad
T_{\bSig,k}(\mathcal P)=\big(\mathcal P^\top \widehat z_{\bSig,k}\big)^2.
\]
These quantities provide a natural basis for inference on principal directions and eigenspaces while avoiding the sign ambiguity of individual eigenvectors. The spiked sample eigenvalues will serve as the corresponding inferential targets in the two-sample problem studied in Section~4.

\section{Unified asymptotic theory for spiked covariance and correlation PCA}\label{sectheory}
This section presents the main asymptotic results for the spiked eigenstructure of $\widehat{\bR}$ and $\bS$ under the generalized spiked model.

\subsection{Preliminaries from random matrix theory}

To state the main asymptotic results, we introduce several standard quantities from random matrix theory. Let $A_n$ be a $p\times p$ symmetric matrix with eigenvalues
\[
\lambda_1(A_n)\le \cdots\le \lambda_p(A_n).
\]
Its empirical spectral distribution is defined by
\[
F^{A_n}(x)=\frac{1}{p}\sum_{j=1}^p I\(\lambda_j(A_n)\le x\).
\]

Under Assumptions {\bf A1--A4}, the empirical spectral distribution of $\widehat{\bR}$ converges weakly, almost surely, to a nonrandom probability distribution function $F^{y,H_R}$. Its Stieltjes transform $s_{F^{y,H_R}}(z)$ satisfies
\[
s_{F^{y,H_R}}(z)
=
\int \frac{1}{t\(1-y-yzs_{F^{y,H_R}}(z)\)-z}\,dH_R(t),
\qquad z\in\mathbb C^+.
\]
For the covariance matrix case, the corresponding quantities are defined analogously with $H_R$ replaced by $H_\Sigma$. We state the correlation-side results first and then give the covariance-side counterparts through the corresponding substitutions.

\subsection{Spiked eigenvalues of $\widehat{\bR}$ and $\bS$}

We begin with the sample spiked eigenvalues. Under the separability condition, each population spike generates a cluster of sample eigenvalues outside the support of the limiting bulk spectrum. We first state their almost sure limits.

\begin{thm}\label{theigvaluecorr}
	Assume {\bf A1-A4}. For each population spiked eigenvalue $\alpha_k$ with multiplicity $m_k$, where $k=1,\cdots,K$, and the associated sample eigenvalues $\{\lambda_j(\widehat{\bR}), j\in I_k\}$, we have that for all $j\in I_k$, as $n\to\infty$,
	\[
	\lambda_j(\widehat{\bR})/\varphi_{{y_n,H_{R,n}}}(\alpha_k) - 1 \to 0
	\]
	almost surely.
\end{thm}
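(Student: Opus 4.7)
\medskip
\noindent\textbf{Proof plan for Theorem \ref{theigvaluecorr}.}
The plan is to reduce the correlation-matrix problem to an equivalent sample-covariance-matrix problem whose first-order spiked behaviour is already known. Write $\bD=\diag(\bS)$ and introduce the auxiliary matrix
\[
\widetilde{\bR}=[\diag(\bSig)]^{-1/2}\,\bS\,[\diag(\bSig)]^{-1/2}=\tfrac{1}{n}\bG\bX\bX^\top\bG^\top,
\]
which is a genuine sample covariance matrix built from the i.i.d.\ matrix $\bX$ with ``population'' covariance $\bG\bG^\top=\bR$. Setting
\[
\bQ=\bD^{-1/2}[\diag(\bSig)]^{1/2}\qquad(\text{a diagonal matrix}),
\]
we have the exact identity $\widehat{\bR}=\bQ\,\widetilde{\bR}\,\bQ$. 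Since $\bR$ has the same spiked eigenvalues $\alpha_1,\dots,\alpha_K$ and the same bulk LSD $H_{\bR}$ as the model assumed in the theorem, all of \textbf{A1}--\textbf{A4} transfer verbatim to the sample covariance problem associated with $\widetilde{\bR}$.

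Next I would handle the two ingredients separately. First, the generalized spiked first-order theorem for sample covariance matrices (as developed in \cite{BaikS06E,Jiang2021b,zhangzheng2022}) applies to $\widetilde{\bR}$ and yields, for every $j\in I_k$,
\[
\lambda_j(\widetilde{\bR})\big/\varphi_{y_n,H_{\bR,n}}(\alpha_k)\longrightarrow 1\quad\text{a.s.}
\]
Second, I would establish the uniform diagonal concentration
\[
\max_{1\le i\le p}\bigl|\bD_{ii}/\bSig_{ii}-1\bigr|\longrightarrow 0\quad\text{a.s.},
\]
which, since $\bQ_{ii}=\sqrt{\bSig_{ii}/\bD_{ii}}$, immediately gives $\lambda_{\min}(\bQ^2)\to 1$ and $\lambda_{\max}(\bQ^2)\to 1$ almost surely. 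Combining these via Ostrowski's inequality applied to the congruence $\widehat{\bR}=\bQ\widetilde{\bR}\bQ$ produces $\theta_{k,j}\in[\lambda_{\min}(\bQ^2),\lambda_{\max}(\bQ^2)]$ with $\lambda_j(\widehat{\bR})=\theta_{k,j}\,\lambda_j(\widetilde{\bR})$; since $\theta_{k,j}\to 1$ a.s., multiplying by $\lambda_j(\widetilde{\bR})/\varphi_{y_n,H_{\bR,n}}(\alpha_k)$ closes the argument, including the case in which $\alpha_k$ diverges, because the relative rather than additive perturbation bound of Ostrowski is automatically sharp enough.

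The main obstacle is the uniform diagonal concentration. Writing $\bgamma_i^\top$ for the $i$-th row of $\bGa$, one has
\[
\bS_{ii}/\bSig_{ii}-1=\frac{1}{n\|\bgamma_i\|^2}\sum_{j=1}^{n}\bigl[(\bgamma_i^\top\bx_j)^2-\|\bgamma_i\|^2\bigr],
\]
a sum of centred i.i.d.\ summands. Under only the fourth-moment hypothesis \textbf{A1}, pointwise laws of large numbers are immediate but a naive moment-and-union bound just misses summability in $n$ (since $p\asymp n$). I would therefore introduce a standard truncation of $x_{ij}$ at a level diverging slowly with $n$ (chosen so that the truncated variables remain essentially unchanged a.s.\ and have all moments up to an order sufficient for a Bernstein/Rosenthal-type inequality), control the centering adjustment using \textbf{A1}, and then apply a union bound together with the Borel--Cantelli lemma to obtain the uniform a.s.\ bound $\max_i|\bD_{ii}/\bSig_{ii}-1|=o(1)$. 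This is the technical heart of the proof; once it is in hand, the rest of the argument is essentially algebraic, and the structural decomposition $\widehat{\bR}=\bQ\widetilde{\bR}\bQ$ makes the transfer of asymptotics from the covariance to the correlation setting transparent.
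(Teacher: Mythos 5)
Your proposal is correct, but it proves the theorem by a genuinely different route than the paper. The paper works directly with the eigenequation $\bigl|\lambda\,\bV^\top\diag(\bxi)\bV-\bP\bigr|=0$ for $\widehat\bR$, takes a Schur complement with respect to the bulk block, expands the resulting resolvent via Sherman--Morrison--Woodbury, and uses the smallness of $\bD_{\bxi}=\diag(\bxi)-\bI$ to collapse everything to the $M\times M$ master equation $\bigl|\lambda\bl_s^{-1}+\tfrac{\lambda}{n}\bU_s^\top\bX\bB_b^{-1}(\lambda)\bX^\top\bU_s+o_p(n^{-1/2})\bigr|=0$, whose leading order forces $1+\alpha_k\US(\lambda)=0$, i.e.\ $\lambda=\varphi(\alpha_k)$. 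You instead exploit the exact congruence $\widehat\bR=\bQ\widetilde\bR\bQ$ with $\widetilde\bR=\bxi$ and $\bQ=\diag^{-1/2}(\bS)[\diag(\bSig)]^{1/2}$, import the known almost-sure spiked-eigenvalue limit for the sample covariance matrix $\widetilde\bR$ of the population with covariance $\bR$, and transfer it through Ostrowski's inertia-preserving bound $\lambda_j(\widehat\bR)=\theta_j\lambda_j(\widetilde\bR)$ with $\theta_j\in[\lambda_{\min}(\bQ^2),\lambda_{\max}(\bQ^2)]$. This is sound: the multiplicative form of Ostrowski indeed handles divergent $\alpha_k$, and your identification of the uniform diagonal concentration $\max_i|\bS_{ii}/\bSig_{ii}-1|\to 0$ a.s.\ as the technical heart is exactly right --- it is the almost-sure analogue of the bound $\|\bD_{\bxi}\|^2=o_p(n^{-1/2})$ that the paper also relies on, and your truncation-plus-Rosenthal-plus-Borel--Cantelli sketch (using the rank-one quadratic-form moment bound of Lemma \ref{lep1}) is the standard way to get it under \textbf{A1}. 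What your route buys is modularity and transparency for the first-order statement; what it gives up is extensibility: at the $n^{-1/2}$ scale the fluctuations of $\bD_{\bxi}$ contribute nontrivially to the limiting laws in Theorems \ref{theigvaluemulticltcorr} and \ref{theigvectorcltcorr}, and the coarse two-sided Ostrowski bound cannot capture them, which is why the paper sets up the determinant machinery from the outset.
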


To describe the second-order fluctuation of the spiked sample eigenvalues, define
\[
\varpi_k=
\left(
\sqrt n\left(\frac{\lambda_j(\widehat{\bR})}{\varphi_{y_n,H_{R,n}}(\alpha_k)}-1\right),\ j\in I_k
\right)^\top.
\]
Its limiting distribution is characterized by the eigenvalues of a Gaussian random matrix whose covariance structure depends on the spike, the bulk spectrum, and the eigenstructure of $\bG$. Let $\mathbb{G}=(\mathcal{G}_{i,j})={\rm Diag}(\mathbb{G}_1,\cdots,\mathbb{G}_K)$, where each $\mathbb{G}_k$ is an $m_k\times m_k$ zero-mean Gaussian random matrix. For $\mathfrak{i}, \mathfrak{j}, \mathfrak{k}, \mathfrak{l} \in I_k$, we have
\begin{align*}
&{\rm Cov}(\mathcal{G}_{\mathfrak{i},\mathfrak{j}}, \mathcal{G}_{\mathfrak{k},\mathfrak{l}})\\
=&\frac{\mathcal{L}^2_0(\alpha_k)\(\be_{\mathfrak{i}}^\top\be_{\mathfrak{k}}\be_{\mathfrak{j}}^\top\be_{\mathfrak{l}}+\be_{\mathfrak{i}}^\top\be_{\mathfrak{l}}\be_{\mathfrak{k}}^\top\be_{\mathfrak{j}}\)}{\varphi'(\alpha_k)}
+\lim_{n\to+\infty}{\nu_4\mathcal{L}^2_0(\alpha_k)}\(\bU_{\mathfrak{i}}\circ \bU_{\mathfrak{j}}\)^\top\(\bU_{\mathfrak{k}}\circ\bU_{\mathfrak{l}}\)\\
&-\nu_4\lim_{n\to+\infty}{\mathcal{L}^2_0(\alpha_k)}\(\(\bU_{\mathfrak{i}}\circ \bU_{\mathfrak{j}}\)^\top(\bG\circ \bG)\(\bV_{\mathfrak{k}}\circ \bV_{\mathfrak{l}}\)+\(\bU_{\mathfrak{k}}\circ \bU_{\mathfrak{l}}\)^\top(\bG\circ \bG)\(\bV_{\mathfrak{i}}\circ \bV_{\mathfrak{j}}\)\)\\
&-4\alpha_k\lim_{n\to+\infty}{\mathcal{L}^2_0(\alpha_k)}(\bV_{\mathfrak{i}}\circ \bV_{\mathfrak{j}})^\top\(\bV_{\mathfrak{k}}\circ \bV_{\mathfrak{l}}\)
+\lim_{n\to+\infty}\mathcal{L}^2_0(\alpha_k)\(\bV_{\mathfrak{i}}\circ\bV_{\mathfrak{j}}\)^\top\mathfrak{R}_{G}^{\nu_4}\(\bV_{\mathfrak{k}}\circ \bV_{\mathfrak{l}}\),
\end{align*}
where the function $$\mathcal{L}_0(z)= \frac{\varphi'(z)}{\psi(z)} = \frac{z \varphi'(z)}{\varphi(z)},\quad \mathfrak{R}_{G}^{\nu_4}=2(\bR \circ \bR) + \nu_4 \left[(\bG \circ \bG)(\bG^\top \circ \bG^\top)\right].$$

\begin{thm}\label{theigvaluemulticltcorr}
	Assume {\bf A1-A4}. For each population spiked eigenvalue $\alpha_k$ with multiplicity $m_k$, where $k=1,\cdots,K$, and the associated sample eigenvalues $\{\lambda_j(\widehat{\bR}), j\in I_k\}$, the distribution of $\varpi_k$ converges weakly to the joint distribution of the $m_k$ eigenvalues of the Gaussian random matrix $\mathbb{G}_k$, as $n\to\infty$.
\end{thm}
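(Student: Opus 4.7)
I would reduce the joint distribution of $\varpi_k$ to the joint distribution of the eigenvalues of an $m_k\times m_k$ random matrix extracted from a secular equation, then invoke the joint CLT announced as one of the main contributions of this paper for bilinear forms of the resolvent combined with diagonal entries of the sample covariance matrix. Concretely, starting from $\widehat{\bR} = \mathbf{D}^{-1/2}\bS\mathbf{D}^{-1/2}$ with $\mathbf{D}=\diag(\bS)$, I split the data using the decomposition $\bG=\bV\,\mathrm{Diag}[\bl_s^{1/2},\bl_b^{1/2}]\bU^\top$ and apply a Schur complement between the spiked and non-spiked blocks of the eigenvalue equation $\det(\lambda \mathbf{D}-\bS)=0$. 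This isolates the $M$-dimensional spiked block and, by the separation condition {\bf A4}, asymptotically decouples the $K$ groups of spikes, producing for each $k$ a local secular equation
\begin{equation*}
\det\bigl(\alpha_k^{-1}\bI_{m_k} - \bV_{I_k}^\top \widehat{\mathbf{M}}(\lambda)\,\bV_{I_k}\bigr) = 0,
\end{equation*}
where $\widehat{\mathbf{M}}(\lambda)$ is a resolvent-type operator built from the non-spiked companion $n^{-1}\bG_b\bX\bX^\top\bG_b^\top$ and from the diagonal scaling $\mathbf{D}^{-1/2}$.

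\textbf{Linearization around $\varphi(\alpha_k)$.} Theorem~\ref{theigvaluecorr} allows me to write $\lambda_j(\widehat{\bR}) = \varphi_{y_n,H_{\bR,n}}(\alpha_k)\bigl(1+\varpi_{k,j}/\sqrt{n}\bigr)$ for $j\in I_k$. Taylor expanding the secular matrix in $\lambda$ around $\varphi(\alpha_k)$, the zeroth-order part matches $\alpha_k^{-1}\bI_{m_k}$ by the defining equation for $\varphi$; the derivative in $\lambda$ evaluated at $\varphi(\alpha_k)$ yields a scalar multiplier proportional to $\mathcal{L}_0(\alpha_k)$ times the identity on the $m_k\times m_k$ block (this is where the functions $\varphi'$ and $\psi$ enter through $\mathcal{L}_0$). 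Consequently $\varpi_k$ coincides, up to $o_{\mathbb{P}}(1)$, with the vector of ordered eigenvalues of
\begin{equation*}
\mathbb{G}_{k,n} \;:=\; \sqrt{n}\,\mathcal{L}_0(\alpha_k)^{-1}\bigl[\bV_{I_k}^\top\widehat{\mathbf{M}}(\varphi(\alpha_k))\bV_{I_k} - \alpha_k^{-1}\bI_{m_k}\bigr],
\end{equation*}
and the Lipschitz continuity of the ordered-eigenvalue map reduces the theorem to a joint CLT for the centred entries of $\mathbb{G}_{k,n}$ with the stated covariance.

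\textbf{Joint CLT and covariance bookkeeping.} I decompose $\mathbf{D}^{-1/2} = \diag(\bSig)^{-1/2} + [\mathbf{D}^{-1/2}-\diag(\bSig)^{-1/2}]$: the first piece produces the standard resolvent of the non-spiked covariance companion, while the second is handled to first order through the diagonal fluctuations $(\bS)_{ii}-\Sigma_{ii}$. Invoking the paper's joint CLT for bilinear forms $\bv^\top\bQ(z)\bu$ of the resolvent together with the centred diagonal entries of $\bS$ yields joint asymptotic Gaussianity of the entries of $\mathbb{G}_{k,n}$. The limiting covariance decomposes term by term into three contributions matching the stated formula: (i) the pure resolvent covariance produces the $\mathcal{L}_0^2(\alpha_k)/\varphi'(\alpha_k)$ piece plus the $\nu_4\,(\bU\circ\bU)^\top(\bU\circ\bU)$ piece; (ii) the self-covariance of the diagonal corrections yields the $\mathfrak{R}_G^{\nu_4}$ quadratic form sandwiched between $\bV_{\mathfrak i}\circ\bV_{\mathfrak j}$ and $\bV_{\mathfrak k}\circ\bV_{\mathfrak l}$; (iii) the cross-covariance between resolvent bilinear forms and diagonals gives the negative $2\alpha_k\,(\bV\circ\bV)^\top(\bV\circ\bV)$ piece together with the $\nu_4\,(\bU\circ\bU)^\top(\bG\circ\bG)(\bV\circ\bV)$ coupling. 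Aggregating reproduces ${\rm Cov}(\mathcal{G}_{\mathfrak i\mathfrak j},\mathcal{G}_{\mathfrak k\mathfrak l})$.

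\textbf{Main obstacle.} The crux is this last step: because of the normalization $\mathbf{D}^{-1/2}$, I must track the joint fluctuations of resolvent bilinear forms and of sample diagonals simultaneously, and these are non-trivially correlated. Assembling the three covariance pieces cleanly, and showing that the higher-order remainders from expanding $\mathbf{D}^{-1/2}$ around $\diag(\bSig)^{-1/2}$ are negligible at the $n^{-1/2}$ scale, is exactly where the new joint CLT of the paper does the essential work; without it the cross-term couplings in $\bU\circ\bU$ and $\bV\circ\bV$ that distinguish $\widehat{\bR}$ from $\bS$ cannot be recovered. A secondary but routine point is verifying that the off-diagonal secular blocks corresponding to distinct $\alpha_k$'s contribute only lower-order terms to $\varpi_k$, which uses {\bf A4} together with deterministic equivalents for $\widehat{\mathbf{M}}$.
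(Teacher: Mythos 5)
Your proposal matches the paper's proof in all essentials: the paper likewise reduces $\det(\lambda\,\mathrm{diag}(\bxi)-\bxi)=0$ by a Schur complement to the $M\times M$ secular equation $\bigl|\lambda\bl_s^{-1}+\tfrac{\lambda}{n}\bU_s^\top\bX\bB_b^{-1}(\lambda)\bX^\top\bU_s+\lambda\bl_s^{-1/2}\bV_s^\top\bD_{\bxi}\bV_s\bl_s^{-1/2}+o_p(n^{-1/2})\bigr|=0$, substitutes $\lambda=(1+\theta/\sqrt n)\varphi(\alpha_k)$, and then feeds the resulting $m_k\times m_k$ matrix into its joint CLT (Lemma \ref{key} and Corollary \ref{keyc}) for resolvent bilinear forms together with entries of $\bD_{\bxi}$, with the same three-way covariance bookkeeping you describe. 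The proposal is correct and takes essentially the same route.
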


When all spikes are simple, the above result reduces to the following multivariate Gaussian limit.

\begin{thm}\label{theigvaluemulticltcorr2}
	Assume {\bf A1-A4}. Consider $K$ simple spiked eigenvalues $\alpha_1,\cdots,\alpha_K$ and the associated sample eigenvalues $\{\lambda_1(\widehat{\bR}),\cdots,\lambda_K(\widehat{\bR})\}$. As $n\to\infty$, the distribution of
	\[
	\left(\sqrt n\left(\lambda_k(\widehat{\bR})/\varphi_{{y_n,H_{R,n}}}(\alpha_k)-1\right), k=1,\cdots,K \right)^\top
	\]
	converges weakly to a $K$-dimensional zero-mean Gaussian vector with covariance matrix $\mathbf{C}=(c_{\mathfrak{i},\mathfrak{j}})$, where
	\[
	c_{\mathfrak{i},\mathfrak{j}}={\rm Cov}(\mathcal{G}_{\mathfrak{i},\mathfrak{i}}, \mathcal{G}_{\mathfrak{j},\mathfrak{j}}).
	\]
\end{thm}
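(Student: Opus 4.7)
Theorem~\ref{theigvaluemulticltcorr2} is the simple-spike specialization of Theorem~\ref{theigvaluemulticltcorr}, combined with a joint CLT across the $K$ different spikes. The plan is to establish joint weak convergence via the Cram\'er--Wold device and an underlying joint CLT for bilinear forms of the resolvent and diagonal entries of the sample covariance matrix, evaluated at the $K$ distinct spectral parameters $\varphi(\alpha_1),\ldots,\varphi(\alpha_K)$.

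\textbf{Step 1: Linearizing each $\lambda_k$.} Following the secular-equation strategy that underlies Theorem~\ref{theigvaluemulticltcorr}, for each simple spike $\alpha_k$ the sample eigenvalue $\lambda_k(\widehat{\bR})$ is characterized, up to negligible error, by an equation of the form
\[
0 \;=\; \alpha_k^{-1} \;-\; \bV_k^\top\bigl(\widehat{\bR}_b - \lambda_k \bI\bigr)^{-1} \bV_k \;+\; (\text{correction terms induced by }\diag^{-1/2}(\bS)),
\]
where $\bV_k$ is the corresponding column of $\bV_s$. Taylor-expanding about $\varphi(\alpha_k)$ and using the identity $\alpha_k^2 \varphi'(\alpha_k)\, \US'_{F^{y,H_{\bR}}}(\varphi(\alpha_k))=1$ converts the fluctuation $\sqrt{n}\bigl(\lambda_k/\varphi_{y_n,H_{\bR,n}}(\alpha_k) - 1\bigr)$ into an asymptotically linear functional of a centered bilinear form $B_k$ of the resolvent of the non-spiked block together with the diagonal entries of $\bS$.

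\textbf{Step 2: Joint CLT via Cram\'er--Wold.} Fix $(t_1,\ldots,t_K)\in\mathbb{R}^K$ and consider $\sum_{k=1}^K t_k \sqrt{n}\bigl(\lambda_k/\varphi_{y_n,H_{\bR,n}}(\alpha_k)-1\bigr)$. By Step~1 this reduces to a linear functional of the vector $(B_1,\ldots,B_K)$, whose joint Gaussian limit follows from the joint CLT for bilinear forms of the resolvent and diagonal entries of $\bS$ developed as a key contribution of this paper, now applied simultaneously at the $K$ distinct spectral parameters $\varphi(\alpha_1),\ldots,\varphi(\alpha_k)$. Since every such $\varphi(\alpha_k)$ lies strictly outside the support of $F^{y,H_{\bR}}$, and since the separation assumption in {\bf A4} ensures the parameters are bounded away from each other, the resolvent estimates required for the CLT are uniform in $k$.

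\textbf{Step 3: Identifying the covariance, and the main obstacle.} For $k=k'$, $c_{k,k}$ reduces to the variance obtained from the $m_k=1$ case of the covariance formula in Theorem~\ref{theigvaluemulticltcorr}. For $k\neq k'$, the same bilinear-form calculation carries through but now with $\mathcal{L}_0(\alpha_k)$ appearing once and $\mathcal{L}_0(\alpha_{k'})$ appearing once, with the normalizer $\varphi'(\alpha_k)$ replaced by $\sqrt{\varphi'(\alpha_k)\varphi'(\alpha_{k'})}$, and with the Hadamard inner products evaluated between the columns $(\bU_k,\bV_k)$ and $(\bU_{k'},\bV_{k'})$; the fourth-cumulant piece mirrors the within-block formula through the quadratic form generated by $\mathfrak{R}_G^{\nu_4}$. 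The technical heart, and the principal obstacle, is the multi-parameter version of the bilinear-form CLT in Step~2: extending the single-parameter martingale/interpolation decomposition to control cross-covariances of bilinear forms at different spectral parameters while simultaneously handling the dependence introduced by the diagonal normalization $\diag^{-1/2}(\bS)$. Once this multi-parameter CLT is secured, the remainder is an algebraic identification of limits through the $\varphi$, $\psi$, and $\mathcal{L}_0$ functionals already introduced.
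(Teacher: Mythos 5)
Your proposal follows essentially the same route as the paper: a secular (determinant) equation reduces each $\sqrt{n}\left(\lambda_k(\widehat\bR)/\varphi_{y_n,H_{\bR,n}}(\alpha_k)-1\right)$ to a linear functional of the centered bilinear forms $\frac{\varphi(\alpha_k)}{n}\bU_{s,I_k}^\top\bX\bB_b^{-1}(\varphi(\alpha_k))\bX^\top\bU_{s,I_k}$ and $\bV_{s,I_k}^\top\bD_{\bxi}\bV_{s,I_k}$, and the joint Gaussian limit across the $K$ spikes then follows from Lemma \ref{key} and Corollary \ref{keyc}, which constitute precisely the multi-parameter bilinear-form CLT you single out as the principal obstacle (they are stated for two distinct spectral parameters $z_1,z_2$ and extend to any finite collection, so the cross-spike covariances come for free). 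The only inaccuracy is in your sketched cross-covariance for $k\neq k'$: by the $c_{1,2}$ formula of Corollary \ref{keyc}, the factor replacing $\US'(\varphi(\alpha_k))=1/(\alpha_k^2\varphi'(\alpha_k))$ in the off-diagonal entries is the divided difference $\left(\US(\varphi(\alpha_{k'}))-\US(\varphi(\alpha_k))\right)/\left(\varphi(\alpha_{k'})-\varphi(\alpha_k)\right)$ rather than $1/\sqrt{\varphi'(\alpha_k)\varphi'(\alpha_{k'})}$; this does not undermine the argument, since the theorem asserts only $c_{\mathfrak{i},\mathfrak{j}}={\rm Cov}(\mathcal{G}_{\mathfrak{i},\mathfrak{i}},\mathcal{G}_{\mathfrak{j},\mathfrak{j}})$ and the exact constant is fixed by that same lemma.
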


The corresponding covariance-side results take the same general form, with $\bG$ and $\bR$ replaced by $\bGamma$ and $\bSig$, and with $H_R$ replaced by $H_\Sigma$.

\begin{cor}\label{theigvaluecov}
	Assume {\bf A1-A4}$(\bSig)$. For each population spiked eigenvalue $\alpha_k$ with multiplicity $m_k$, $k=1,\ldots,K$, and the associated sample eigenvalues $\{\lambda_j(\bS), j\in I_k\}$, we have the following. For all $j \in I_k$, as $n\to\infty$,
	\[
	\lambda_j(\bS)/\varphi_{{y_n,H_{\Sigma,n}}}(\alpha_k) - 1 \to 0
	\]
	almost surely. Moreover, the distribution of
	\[
	\varpi_k = \left(\sqrt{n}\left(\lambda_j(\bS)/\varphi_{{y_n,H_{\Sigma,n}}}(\alpha_k) - 1\right), j \in I_k\right)^\top
	\]
	converges weakly to the joint distribution of the $m_k$ eigenvalues of the $k$-th block of a Gaussian random matrix $\mathbb{G} = (\mathcal{G}_{i,j}) = \mathrm{Diag}(\mathbb{G}_{1},\ldots,\mathbb{G}_{K})$, where $\mathbb{G}_{k}$ is an $m_k \times m_k$ zero-mean Gaussian random matrix. For $\mathfrak{i}, \mathfrak{j}, \mathfrak{k}, \mathfrak{l} \in I_k$,
	\begin{align*}
		\mathrm{Cov}(\mathcal{G}_{\mathfrak{i},\mathfrak{j}}, \mathcal{G}_{\mathfrak{k},\mathfrak{l}})
		=&\lim_{n\to+\infty}\frac{\mathcal{L}^2_0(\alpha_k)\(\be_{\mathfrak{i}}^\top\be_{\mathfrak{k}}\be_{\mathfrak{j}}^\top\be_{\mathfrak{l}}+\be_{\mathfrak{i}}^\top\be_{\mathfrak{l}}\be_{\mathfrak{k}}^\top\be_{\mathfrak{j}}\)}{\varphi'(\alpha_k)} 
		+ \lim_{n\to+\infty}\nu_4\mathcal{L}^2_0(\alpha_k)\(\bU_{\mathfrak{i}}\circ \bU_{\mathfrak{j}}\)^\top\(\bU_{\mathfrak{k}}\circ \bU_{\mathfrak{l}}\).
	\end{align*}
\end{cor}
\subsection{Principal eigenspace projections of $\widehat{\bR}$ and $\bS$}

We now turn to the principal eigenspace, which is the main inferential object in PCA. As discussed in Section 2, the quantity of direct interest is the projection of a prescribed direction $\mathcal{P}$ onto the sample spiked eigenspace.

For a prescribed unit vector $\mathcal P$, write
\[
\tau_j=\mathcal P^\top \bV_j,\qquad j\in I_k.
\] The following theorem gives the first-order limit and second-order fluctuation of the sample spiked eigenspace projection in an arbitrary direction.

\begin{thm}\label{theigvectorcltcorr}
	Assume {\bf A1-A4}. As $n \to \infty$:
	\begin{itemize}
		\item[(1)] $\mathcal P^\top \widehat\Pi_k \mathcal P - \mathcal{L}_0(\alpha_k)\sum_{j\in I_k}\tau_j^2 \overset{a.s.}{\to} 0;$
		\item[(2)] $\sqrt{n}\left(\mathcal P^\top \widehat\Pi_k \mathcal P - \mathcal{L}_0(\alpha_k)\sum_{j\in I_k}\tau_j^2\right) \overset{D}{\to} N(0,\sigma_k^2),$
	\end{itemize}
	where
	\[
	\sigma_k^2 = \sum_{j=1}^7\mathcal{V}_{j,j}^{(k)} + 2\sum_{1 \leq j < \ell \leq 7}\mathcal{V}_{j,\ell}^{(k)}.
	\]
\end{thm}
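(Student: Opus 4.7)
The plan is to derive the statement via the contour-integral representation of the spectral projector together with the joint CLT for bilinear forms of the resolvent and diagonal entries of $\bS$ that is highlighted as a key contribution of the paper. Concretely, for a contour $\Gamma_k$ that encloses only the cluster $\{\lambda_j(\widehat\bR), j\in I_k\}$ (which is guaranteed to be possible, with probability tending to one, by Theorem \ref{theigvaluecorr} since the cluster concentrates around $\varphi_{y,H_{\bR}}(\alpha_k)$), we write
\[
\mathcal{P}_{s,k}^\top \mathcal{P}_{s,k} \;=\; -\frac{1}{2\pi i}\oint_{\Gamma_k}\mathcal{P}^\top(\widehat\bR-z\bI)^{-1}\mathcal{P}\,dz.
\]
Using $\widehat\bR = \bD^{-1/2}\bS\bD^{-1/2}$ with $\bD=\diag(\bS)$, this becomes a contour integral of $\mathcal{P}^\top\bD^{1/2}(\bS-z\bD)^{-1}\bD^{1/2}\mathcal{P}$, so the asymptotics of $\mathcal{P}_{s,k}^\top\mathcal{P}_{s,k}$ reduce to analyzing a family of bilinear forms in $(\bS-z\bD)^{-1}$ together with functions of the diagonal $\bD$ on a small loop around $\varphi_{y,H_{\bR}}(\alpha_k)$.

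The next step is a rank-$M$ decomposition $\bG=\bV_s\bl_s^{1/2}\bU_s^\top+\bG_b$ and a Woodbury/Sherman--Morrison-type identity to separate the spiked contribution from the bulk resolvent. The bilinear form then decomposes into a bulk term $\mathcal{P}^\top(\bI-\bR_b/\alpha_k)^{-1}\mathcal{P}$ plus contributions coupling $\mathcal{P}$ to the spiked directions $\bV_s,\bU_s$ through a finite-rank ``master'' matrix whose entries are bilinear forms of the bulk resolvent evaluated at points in $\Gamma_k$. Deterministic equivalents give the first-order limit: the residue calculation at $z=\varphi_{y,H_{\bR}}(\alpha_k)$ produces the factor $\mathcal{L}_0(\alpha_k)=z\varphi'(z)/\varphi(z)$, and the spiked-direction coupling reduces to $\sum_{j\in I_k}\tau_j^2=\mathcal{T}_{\bR,k}^\top\mathcal{T}_{\bR,k}$, yielding part (1). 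The constants $\mathcal{L}_{0'}$, $\mathcal{L}_1$, $\mathcal{L}_2$ will naturally appear when Taylor-expanding the deterministic equivalents to the next order along $\Gamma_k$.

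For part (2) I would combine the $\sqrt n$-scale CLT for the vector of relevant bilinear forms with a delta-method/residue argument on the contour integral. The joint CLT delivers a Gaussian vector whose covariance structure is encoded by $\mathfrak{R}_G^{\nu_4}=2(\bR\circ\bR)+\nu_4[(\bG\circ\bG)(\bG^\top\circ\bG^\top)]$ (the quartic fluctuation structure of the diagonal entries and of off-diagonal resolvent bilinear forms). The six pieces $\mathcal{V}^{(k)}_{j,j}$ that comprise $\sigma_k^2$ correspond to the six independent sources of randomness that survive after the residue extraction, namely: (i) the quadratic fluctuation of the direct bilinear form in direction $\mathcal{T}_{\bR,k}$; (ii) cross-terms against other spikes $\alpha_j$, $j\neq k$, entering through $(\alpha_k-\alpha_j)^{-1}$; (iii) the bulk-resolvent term involving $(\bI-\bR_b/\alpha_k)^{-1}$; (iv)--(vi) three contributions coming from the fluctuation of the diagonal normalization $\bD^{1/2}\mathcal{P}$ coupled to $\mathcal{P}$, to the same spike, and to other spikes respectively. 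Expressing $\sqrt n$ times the deviation as a linear functional of a single jointly Gaussian vector and forming its variance then produces $\sum_j\mathcal{V}^{(k)}_{j,j}+2\sum_{j<\ell}\mathcal{V}^{(k)}_{j,\ell}$.

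The principal obstacle is bookkeeping rather than any single hard estimate. Matching the six fluctuation channels to the $\mathcal{V}^{(k)}_{j,\ell}$ expressions requires careful tracking of Hadamard-product identities of the form $\E[(x^\top Ax)(x^\top Bx)]=2\tr(AB)+\nu_4(\mathrm{diag}\,A)^\top(\mathrm{diag}\,B)+\ldots$ at each stage of the expansion, and then showing that all off-contour remainders are $o_P(1/\sqrt n)$ uniformly in $z\in\Gamma_k$ so that the residue operation commutes with the CLT. The delicate points will be (a) handling the interaction between $\bD^{1/2}\mathcal{P}$ and the spike-direction bilinear forms, since the diagonal of $\bS$ is not independent of the off-diagonal structure, and (b) verifying that when two spikes are distinct the $(\alpha_k-\alpha_j)^{-1}$ contributions from different $j\neq k$ combine exactly into the $\mathcal{V}^{(k)}_{j,\ell}$ cross-terms rather than producing extraneous correlations.
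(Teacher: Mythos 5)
Your proposal follows essentially the same route as the paper: the Cauchy integral representation of $\mathcal{P}_{s,k}^\top\mathcal{P}_{s,k}$, a Woodbury-type separation of the spiked block from the bulk resolvent, residue calculus producing $\mathcal{L}_0(\alpha_k)$ and its derivatives, and the joint CLT for bilinear forms of the bulk resolvent and the diagonal fluctuation $\bD_{\bxi}$ (the paper's Lemma \ref{key}/Corollary \ref{keyc}), with your six fluctuation channels matching the paper's terms $\Delta_1,\dots,\Delta_6$. The only cosmetic difference is that the paper linearizes the normalization via the expansion of $\p^\top(\widehat\bR-z\bI)^{-1}\p$ in powers of $\bD_{\bxi}$ (imported from \cite{Yin2022a}) rather than working with the exact identity $\bD^{1/2}(\bS-z\bD)^{-1}\bD^{1/2}$, but this leads to the same terms.
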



The variance $\sigma_k^2$ admits an explicit decomposition into seven groups of terms, corresponding to the fluctuation contributions from the spiked resolvent part, spike interaction, bulk interaction, and the additional normalization terms. Since the complete expressions are lengthy and would interrupt the flow of the main exposition, we defer the explicit formulas for
$
\mathcal{V}_{j,\ell}^{(k)}, 1\le j\le \ell\le 7,
$
to the supplementary material.

In the simple-spike case, the result takes a more transparent form. Taking $\mathcal{P}=\bV_k$, we obtain the following asymptotic distribution for the alignment between the sample and population principal directions. For notational simplicity, the auxiliary quantities $\mathcal L_2(\alpha_k)$ and $\mathcal L_{0'}(\alpha_k)$ are defined in the supplementary material.

\begin{cor}\label{cor1eigvectorcltcorr}
	Assume {\bf A1-A4}. Consider $\bV_k$, the population eigenvector corresponding to a simple spike $\alpha_k$. As $n \to \infty$:
	\begin{itemize}
		\item[(1)] $\left(\bV_k^\top \widehat\bz_k\right)^2 - \mathcal{L}_0(\alpha_k) \overset{a.s.}{\to} 0;$
		\item[(2)] $\sqrt{n}\left(\left(\bV_k^\top \widehat\bz_k\right)^2 - \mathcal{L}_0(\alpha_k)\right) \overset{D}{\to} N(0,\sigma_k^2),$
	\end{itemize}
	where the variance $\sigma_k^2$ is \begin{align*}
		&\lim_{n\to+\infty} 2\alpha_k^2\mathcal{L}_2(\alpha_k) + {\nu_4}\lim_{n\to+\infty} \alpha_k^2\mathcal{L}^2_{0'}(\alpha_k)\left(\bU_k\circ\bU_k\right)^\top \left(\bU_k\circ\bU_k\right) \\
		& + \lim_{n\to+\infty} \alpha_k^2\mathcal{L}^2_{0'}(\alpha_k)\left(\bV_k\circ\bV_k\right)^\top \mathfrak{R}_{G}^{\nu_4} \left(\bV_k\circ\bV_k\right) \\
		& - 2\lim_{n\to+\infty} \alpha_k^2\mathcal{L}^2_{0'}(\alpha_k) \bigg\{ 2\alpha_k \left(\bV_k\circ\bV_k\right)^\top \left(\bV_k\circ\bV_k\right) 
		 + \nu_4 \left(\bV_k\circ\bV_k\right)^\top \left(\bG\circ\bG\right) \left(\bU_k\circ\bU_k\right) \bigg\}.
	\end{align*}
\end{cor}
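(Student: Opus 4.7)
The plan is to derive Corollary \ref{cor1eigvectorcltcorr} as a direct specialization of Theorem \ref{theigvectorcltcorr} to the projection direction $\mathcal{P} = \bV_k$, exploiting the fact that $\alpha_k$ is simple (so $I_k=\{k\}$) together with the orthogonality of $\bV$ and the SVD structure $\bG = \bV\,\mathrm{Diag}[\bl_s^{1/2},\bl_b^{1/2}]\,\bU^\top$.

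\textbf{Step 1: identify the left-hand side.} For $\mathcal{P}=\bV_k$ we have
$\mathcal{P}_{s,k} = \widehat\bz_k\widehat\bz_k^\top\bV_k$, so
$\mathcal{P}_{s,k}^\top\mathcal{P}_{s,k} = (\bV_k^\top\widehat\bz_k)^2$.
Since $\bV$ is orthogonal and $I_k=\{k\}$, the coefficients $\tau_j=\bV_k^\top\bV_j$ satisfy $\tau_j=\delta_{jk}$, hence $\sum_{j\in I_k}\tau_j^2=1$. Plugging into part~(1) of Theorem \ref{theigvectorcltcorr} immediately gives the almost sure statement $(\bV_k^\top\widehat\bz_k)^2-\mathcal{L}_0(\alpha_k)\to 0$, proving part (1) of the corollary.

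\textbf{Step 2: identify which auxiliary quantities vanish.} The quantities
$\mathcal{T}_{\bR,j}=\bV_j\bV_j^\top\bV_k$ and $\mathcal{T}_{\bG,j}=\bU_j\bV_j^\top\bV_k$ vanish for every $j\neq k$ by orthogonality of the columns of $\bV$, while $\mathcal{T}_{\bR,k}=\bV_k$ and $\mathcal{T}_{\bG,k}=\bU_k$. In addition, $\bG_b^\top\bV_k = \bU_b\bl_b^{1/2}\bV_b^\top\bV_k = 0$ since the columns of $\bV_b$ are orthogonal to $\bV_k$. Consequently, every term in the twenty-one covariance summands $\mathcal{V}_{j,\ell}^{(k)}$ that contains a factor of the form $\mathcal{T}_{\bR,j}$, $\mathcal{T}_{\bG,j}$ (for $j\neq k$) or $\bG_b^\top\mathcal{P}$ is zero. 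The surviving terms are exactly $\mathcal{V}_{1,1}^{(k)}$, $\mathcal{V}_{1,4}^{(k)}$, $\mathcal{V}_{1,5}^{(k)}$, $\mathcal{V}_{4,4}^{(k)}$, $\mathcal{V}_{4,5}^{(k)}$ and $\mathcal{V}_{5,5}^{(k)}$.

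\textbf{Step 3: simplify the surviving terms using the SVD.} I would substitute $\mathcal{T}_{\bR,k}=\bV_k$ and $\mathcal{T}_{\bG,k}=\bU_k$ into each of the six surviving $\mathcal{V}^{(k)}$ formulas, then use the fundamental identity $\bG\bU_k = \alpha_k^{1/2}\bV_k$, which implies $\bG\mathcal{T}_{\bG,k}\circ\bG\mathcal{T}_{\bG,k} = \alpha_k(\bV_k\circ\bV_k)$. This reduction eliminates the $\bG\mathcal{T}_{\bG,k}\circ\bG\mathcal{T}_{\bG,k}$ factors in $\mathcal{V}_{1,4}^{(k)}$ and $\mathcal{V}_{1,5}^{(k)}$ and produces the explicit $2\alpha_k(\bV_k\circ\bV_k)^\top(\bV_k\circ\bV_k)$ contributions appearing in the stated $\sigma_k^2$. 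The $\mathcal{V}_{4,4}^{(k)}$, $\mathcal{V}_{4,5}^{(k)}$ and $\mathcal{V}_{5,5}^{(k)}$ terms produce the quadratic forms involving $\mathfrak{R}_G^{\nu_4}=2(\bR\circ\bR)+\nu_4(\bG\circ\bG)(\bG^\top\circ\bG^\top)$ evaluated at $\bV_k\circ\bV_k$.

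\textbf{Step 4: assemble.} Adding the six surviving terms according to $\sigma_k^2=\sum_{j=1}^6\mathcal{V}_{j,j}^{(k)}+2\sum_{1\le j<\ell\le 6}\mathcal{V}_{j,\ell}^{(k)}$, the coefficients of the various quadratic forms collapse via the algebraic identity $\mathcal{L}_0^2(\alpha_k)-2\mathcal{L}_0(\alpha_k)(\mathcal{L}_0(\alpha_k)+\alpha_k\mathcal{L}_{0'}(\alpha_k))+(\mathcal{L}_0(\alpha_k)+\alpha_k\mathcal{L}_{0'}(\alpha_k))^2 = \alpha_k^2\mathcal{L}_{0'}^2(\alpha_k)$, which accounts for the uniform prefactor $\alpha_k^2\mathcal{L}_{0'}^2(\alpha_k)$ that multiplies the $(\bR\circ\bR)$, $(\bG\circ\bG)(\bG^\top\circ\bG^\top)$ and cross Hadamard terms in the stated $\sigma_k^2$. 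The term $2\alpha_k^2\mathcal{L}_2(\alpha_k)$ comes cleanly from the first summand in $\mathcal{V}_{1,1}^{(k)}$, while the $\nu_4\alpha_k^2\mathcal{L}_{0'}^2(\alpha_k)(\bU_k\circ\bU_k)^\top(\bU_k\circ\bU_k)$ contribution emerges after combining the second summand of $\mathcal{V}_{1,1}^{(k)}$ with the appropriate cross terms from $\mathcal{V}_{1,4}^{(k)}$ and $\mathcal{V}_{1,5}^{(k)}$.

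\textbf{Expected main obstacle.} The conceptual content is a direct specialization, so no new probabilistic tool is required; the bottleneck is purely algebraic. The main difficulty is the careful bookkeeping of coefficients in Step 4: verifying that the mixture of $\mathcal{L}_0(\alpha_k)$, $\mathcal{L}_{0'}(\alpha_k)$ and $\mathcal{L}_0(\alpha_k)+\alpha_k\mathcal{L}_{0'}(\alpha_k)$ factors appearing in $\mathcal{V}_{1,4}^{(k)}, \mathcal{V}_{1,5}^{(k)}, \mathcal{V}_{4,4}^{(k)}, \mathcal{V}_{4,5}^{(k)}, \mathcal{V}_{5,5}^{(k)}$ combine (through the square completion indicated above) into the single prefactor $\alpha_k^2\mathcal{L}_{0'}^2(\alpha_k)$ that appears in the stated $\sigma_k^2$. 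Once this identity is verified, the remaining rearrangement of Hadamard products is routine.
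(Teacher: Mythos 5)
Your proposal is correct and is exactly the paper's argument: specialize Theorem \ref{theigvectorcltcorr} to $\mathcal{P}=\bV_k$, observe that $\mathcal{T}_{\bR,j},\mathcal{T}_{\bG,j}$ (for $j\neq k$) and $\bG_b^\top\mathcal{P}$ vanish so that only $\mathcal{V}_{1,1}^{(k)},\mathcal{V}_{1,4}^{(k)},\mathcal{V}_{1,5}^{(k)},\mathcal{V}_{4,4}^{(k)},\mathcal{V}_{4,5}^{(k)},\mathcal{V}_{5,5}^{(k)}$ survive, and collapse the coefficients via the square $\bigl(\mathcal{L}_0-(\mathcal{L}_0+\alpha_k\mathcal{L}_{0'})\bigr)^2=\alpha_k^2\mathcal{L}_{0'}^2$; your write-up is in fact more detailed than the paper's one-line justification. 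The only nitpick is in Step 4: the $\nu_4\alpha_k^2\mathcal{L}_{0'}^2(\alpha_k)(\bU_k\circ\bU_k)^\top(\bU_k\circ\bU_k)$ term comes solely from the second summand of $\mathcal{V}_{1,1}^{(k)}$, while the cross terms $2\mathcal{V}_{1,4}^{(k)}+2\mathcal{V}_{1,5}^{(k)}$ produce the separate $-2\alpha_k^2\mathcal{L}_{0'}^2(\alpha_k)\{\cdots\}$ bracket, not a contribution to that term.
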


\begin{rem}
	Corollary \ref{cor1eigvectorcltcorr} is stated for a simple spike in order to facilitate interpretation. The corresponding multiple-spike result follows from Theorem \ref{theigvectorcltcorr} with only the relevant variance components retained.
\end{rem}

\begin{rem}
	If $\mathcal{P}$ is orthogonal to the space spanned by $\{\bV_j:\ j\in I_k\}$, then the corresponding deterministic projection terms vanish. In this case, Theorem \ref{theigvectorcltcorr} implies that $\mathcal{P}^\top \widehat\bz_k \overset{a.s.}{\to} 0$, as $n\to\infty$, and the limiting variance of $\sqrt{n}\left(\mathcal{P}^\top \widehat\bz_k\right)^2$ is zero.
\end{rem}
The corresponding covariance-side results provide a generalized-bulk benchmark for the correlation-side theory.

\begin{cor}\label{theigvectorcltcov}
	Assume {\bf A1-A4}$(\bSig)$. As $n \to \infty$:
	\begin{itemize}
		\item[(1)] $\mathcal P^\top \widehat\Pi_{\bSig,k}\mathcal P - \mathcal{L}_0(\alpha_k)\sum_{j \in I_k}\tau_j^2 \overset{a.s.}{\to} 0;$
		\item[(2)] $\sqrt{n}\left(\mathcal P^\top \widehat\Pi_{\bSig,k}\mathcal P - \mathcal{L}_0(\alpha_k)\sum_{j \in I_k}\tau_j^2\right) \overset{D}{\to} N(0,\sigma_k^2),$
	\end{itemize}
	where
	\[
	\sigma_k^2 = \sum_{j=1}^3\mathcal{V}_{j,j}^{(k)} + 2\sum_{1 \leq j < \ell \leq 3}\mathcal{V}_{j,\ell}^{(k)}.
	\]
The explicit covariance-side variance components are given in the supplementary material.
\end{cor}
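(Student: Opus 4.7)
The plan is to parallel the proof of Theorem \ref{theigvectorcltcorr}, exploiting the fact that for $\bS$ no random normalization by $\diag^{-1/2}(\bS)$ is required. This simplification eliminates the three groups of contributions that, in the correlation-matrix case, arose from fluctuations of the diagonal entries of $\bS$; this is precisely why only the blocks $\mathcal{V}_{j,\ell}^{(k)}$ with $j,\ell \in \{1,2,3\}$ enter the limiting variance here.

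First, I would express the spiked eigenstructure of $\bS = \bGa (n^{-1} \bX \bX^\top) \bGa^\top$ through the singular value decomposition of $\bGa$ and the bulk matrix $\bS_b = \bGa_b (n^{-1} \bX \bX^\top) \bGa_b^\top$. A Schur-complement identity reduces the eigenvalue equation $\det(\lambda \bI - \bS) = 0$ to a fixed-size $M \times M$ determinantal equation whose entries are bilinear forms in the resolvent $(\lambda \bI - \bS_b)^{-1}$ against the columns of $\bV_s$. Solving $\bS \widehat{\bz} = \lambda \widehat{\bz}$ for $\widehat{\bz}$ then yields a representation of $\mathcal{P}^\top \widehat{\bz}_j$, and hence of $\mathcal{P}_{s,k}^\top \mathcal{P}_{s,k}$, in terms of the same family of bilinear forms with one argument replaced by $\mathcal{P}$. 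Substituting the deterministic limits of these bilinear forms (controlled by $\varphi_{y_n,H_{\bSig,n}}$ and related transforms at $z=\alpha_k$) yields the first-order statement (1), with the coefficient $\mathcal{L}_0(\alpha_k)$ emerging from the identity relating $\varphi'$ and $\US'$ at $\varphi(\alpha_k)$.

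For the central limit theorem in (2), I would linearize $\mathcal{P}_{s,k}^\top \mathcal{P}_{s,k}$ about its deterministic limit and express the $\sqrt{n}$-scaled fluctuation as a linear combination of three types of centered bilinear forms of $(\lambda \bI - \bS_b)^{-1}$: those with both arguments in $\{\bV_{s,j}\}_{j\in I_k}$, those with one argument in $\{\bV_{s,j}\}_{j\in I_k}$ and the other in $\{\bV_{s,j}\}_{j\notin I_k}$, and those with one argument in $\{\bV_{s,j}\}_{j\in I_k}$ and the other equal to $\mathcal{P}$. Each of these contributes one of the groups indexed by $1,2,3$ in the variance formula. The joint Gaussian limit then follows from the resolvent component of the novel joint CLT introduced earlier in the paper, applied to $\bS_b$ in place of its correlation counterpart; substituting $\bGa$ for $\bG$ and $\bSig$ for $\bR$ in the expressions for $\mathcal{V}_{j,j}^{(k)}$ and $\mathcal{V}_{j,\ell}^{(k)}$, $j,\ell \in \{1,2,3\}$, recovers exactly the claimed covariance.

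The main obstacle is a careful accounting exercise: one must identify, in the parallel correlation-matrix analysis, which terms in the expansion of $\mathcal{P}_{s,k}^\top \mathcal{P}_{s,k}$ stem from perturbations of the resolvent and which from perturbations of $\diag(\bS)$, and verify that only the former survive here. Once this correspondence is pinned down, the passage from the simple-spike case to the multiple-spike case via the block-diagonal joint CLT, together with the uniform separation of the spiked sample eigenvalues from the bulk, transfers directly from the proof of Theorem \ref{theigvectorcltcorr} without substantive modification.
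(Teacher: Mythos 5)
Your proposal matches the paper's intended argument: the paper explicitly omits this proof, stating that it follows from the correlation-matrix analysis by the same intermediate steps, and you correctly reconstruct those steps—the contour-integral/Schur-complement reduction to bilinear forms of the bulk resolvent, the application of the resolvent part of the joint CLT, and the observation that the $\bD_{\bxi}$-induced terms $\Delta_4,\Delta_5,\Delta_6$ (hence $\mathcal{V}_{j,\ell}^{(k)}$ for $j$ or $\ell\in\{4,5,6\}$) are absent because no normalization is applied. This is essentially the same approach as the paper's.
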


\begin{cor}\label{cor1eigvectorcltcov}
	Assume {\bf A1-A4}$(\bSig)$. Consider $\bV_k$, the population eigenvector corresponding to a simple spike $\alpha_k$. As $n \to \infty$:
	\begin{itemize}
		\item[(1)] $\left(\bV_k^\top \widehat\bz_{\bSig,k}\right)^2 - \mathcal{L}_0(\alpha_k) \overset{a.s.}{\to} 0;$
		\item[(2)] $\sqrt{n}\left(\left(\bV_k^\top \widehat\bz_{\bSig,k}\right)^2 - \mathcal{L}_0(\alpha_k)\right) \overset{D}{\to} N(0,\sigma_k^2),$
	\end{itemize}
	where
	\[
	\sigma_k^2 = \lim_{n\to+\infty}2\alpha_k^2\mathcal{L}_2(\alpha_k) + \nu_4\lim_{n\to+\infty}\alpha_k^2\mathcal{L}_{0'}^2(\alpha_k)\left(\bU_k\circ \bU_k\right)^\top\left(\bU_k\circ\bU_k\right).
	\]
\end{cor}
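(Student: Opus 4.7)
\bigskip

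\noindent\textbf{Proof proposal for Corollary \ref{cor1eigvectorcltcov}.}
The plan is to obtain the statement as a direct specialization of Corollary \ref{theigvectorcltcov}, by taking the generic projection direction $\mathcal{P}$ to be the population eigenvector $\bV_k$ associated with the simple spike $\alpha_k$, and then showing that almost all of the variance-covariance ingredients defined in the preceding block of formulas (read off for the covariance case, with $\bG\mapsto\bGa$ and $\bR\mapsto\bSig$) collapse to zero. Since $\alpha_k$ is simple we have $I_k=\{k\}$, so the projector $\sum_{j\in I_k}\widehat\bz_{\bSig,j}\widehat\bz_{\bSig,j}^\top$ is the rank-one projector $\widehat\bz_{\bSig,k}\widehat\bz_{\bSig,k}^\top$, and therefore $\mathcal{P}_{s,k}^\top \mathcal{P}_{s,k}=(\bV_k^\top\widehat\bz_{\bSig,k})^2$, which matches the quantity in the corollary.

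Next I would compute the auxiliary objects appearing in Corollary \ref{theigvectorcltcov} at $\mathcal{P}=\bV_k$. By orthonormality of the columns of $\bV$, one has $\mathcal{P}^\top\bV=(\tau_1,\dots,\tau_p)=\be_k^\top$, so $\tau_j=\delta_{jk}$ and $\sum_{j\in I_k}\tau_j^2=1$. This already yields the first-order assertion $(\bV_k^\top\widehat\bz_{\bSig,k})^2-\mathcal{L}_0(\alpha_k)\to 0$ almost surely. Moreover,
\begin{align*}
\mathcal{T}_{\bSig,j}&=\bigl(\sum_{\ell\in I_j}\bV_\ell\bV_\ell^\top\bigr)\bV_k=\delta_{jk}\bV_k,\\
\mathcal{T}_{\bGa,j}&=\bigl(\sum_{\ell\in I_j}\bU_{s,\ell}\bV_{s,\ell}^\top\bigr)\bV_k=\delta_{jk}\bU_k,
\end{align*}
and $\bGa_b^\top\bV_k=\bU_b\bl_b^{1/2}\bV_b^\top\bV_k=0$ because $\bV_b\perp\bV_s$. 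These three reductions are the key: they kill every term in which either (i) a factor $\mathcal{T}_{\bSig,j}$ or $\mathcal{T}_{\bGa,j}$ with $j\neq k$ appears inside a sum, or (ii) a factor $\bGa_b^\top\mathcal{P}$ appears.

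With these reductions in hand, for the covariance version (which, as noted below Corollary \ref{theigvectorcltcov}, only involves $\mathcal{V}_{1,1}^{(k)},\mathcal{V}_{2,2}^{(k)},\mathcal{V}_{3,3}^{(k)}$ and the cross pairs among the first three indices) I would argue that $\mathcal{V}_{2,2}^{(k)}$ vanishes (sum over $j_1,j_2\neq k$ of terms each containing $\mathcal{T}_{\bGa,j_1},\mathcal{T}_{\bGa,j_2}$) and that $\mathcal{V}_{3,3}^{(k)}$ as well as the cross terms $\mathcal{V}_{1,2}^{(k)},\mathcal{V}_{1,3}^{(k)},\mathcal{V}_{2,3}^{(k)}$ vanish (each contains at least one factor of $\bGa_b^\top\bV_k=0$ or of $\mathcal{T}_{\bSig,j}$, $\mathcal{T}_{\bGa,j}$ with $j\neq k$). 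Therefore only $\mathcal{V}_{1,1}^{(k)}$ survives. Substituting $\mathcal{T}_{\bSig,k}=\bV_k$ (so $\mathcal{T}_{\bSig,k}^\top\mathcal{T}_{\bSig,k}=1$) and $\mathcal{T}_{\bGa,k}=\bU_k$ into the definition of $\mathcal{V}_{1,1}^{(k)}$ produces
\[
\sigma_k^2=2\alpha_k^2\mathcal{L}_2(\alpha_k)+\nu_4\,\alpha_k^2\mathcal{L}^2_{0'}(\alpha_k)\,(\bU_k\circ\bU_k)^\top(\bU_k\circ\bU_k),
\]
which is exactly the variance stated in the corollary.

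I expect the only real bookkeeping obstacle to be the case analysis ensuring that every off-diagonal block $\mathcal{V}_{p,q}^{(k)}$ with $(p,q)\neq(1,1)$ indeed contains a nullifying factor once $\mathcal{P}=\bV_k$; this is a mechanical check term by term, but it is the place where a careless index could leave a spurious contribution. Everything else follows immediately from Corollary \ref{theigvectorcltcov} by the continuous mapping theorem and the evaluations above.
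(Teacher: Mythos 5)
Your proposal is correct and follows essentially the same route as the paper: the paper treats this corollary as an immediate specialization of Corollary \ref{theigvectorcltcov} at $\mathcal{P}=\bV_k$, using exactly the reductions you identify ($\mathcal{T}_{\bSig,j}=\delta_{jk}\bV_k$, $\mathcal{T}_{\bGa,j}=\delta_{jk}\bU_k$, $\bGa_b^\top\bV_k=0$), mirroring its explicit derivation of Corollary \ref{cor1eigvectorcltcorr} from Theorem \ref{theigvectorcltcorr} in the correlation case. Your term-by-term check that only $\mathcal{V}_{1,1}^{(k)}$ survives is the whole content of the argument and matches the stated variance.
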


\subsection{Comparison between covariance- and correlation-based PCA}

The preceding results provide a unified asymptotic description of the spiked eigenstructure of both $\bS$ and $\widehat{\bR}$ under a generalized spiked model. They also reveal several important differences between covariance-based and correlation-based PCA in high dimensions.

For the spiked eigenvalues, the covariance-side fluctuation depends on the bulk spectrum of $\bGamma_b$, the spike magnitudes, and the eigenstructure of the spiked part of $\bGamma$. In particular, when $\nu_4=0$, the dependence on the right singular vectors disappears and the fluctuation structure becomes simpler. In the correlation case, however, the asymptotic distribution depends on the full eigenstructure of $\bG$, reflecting the additional randomness induced by diagonal normalization. Thus, although the first-order limit of a sample spiked eigenvalue can still be described through the same deterministic mapping $\varphi$ after replacing $\bSig$ by $\bR$, its second-order behavior cannot be recovered by such a direct substitution.

A similar phenomenon occurs for the spiked eigenspaces. The projector-type limits established above show that the first-order behavior of the sample principal eigenspace has the same general form in both covariance-based and correlation-based PCA. In particular, the limiting amount of captured signal is determined by the factor $\mathcal{L}_0(\alpha_k)$ together with the projection of the target direction onto the corresponding population eigenspace. However, the fluctuation behavior around this limit is substantially different in the two settings. For the covariance matrix, the second-order terms are determined by the bulk structure and the eigenstructure of $\bGamma$. For the correlation matrix, the normalization step introduces additional terms involving the diagonal fluctuation of $\bS$, and these terms contribute essentially to the asymptotic variance of the sample eigenspace projection.

These results also clarify the effect of signal strength on eigenspace recovery. In both settings, Corollaries~\ref{cor1eigvectorcltcorr} and~\ref{cor1eigvectorcltcov} imply that if the spike remains fixed, then there exists a nonvanishing asymptotic angle between the sample principal direction and its population counterpart. By contrast, when the spike diverges, this angle vanishes asymptotically. Indeed, since
\[
\mathcal{L}_0(\alpha_k)
=
\frac{1-y\int \frac{t^2}{(\alpha_k-t)^2}\,dH_R(t)}
{1+y\int \frac{t}{\alpha_k-t}\,dH_R(t)},
\]
we have $\mathcal{L}_0(\alpha_k)\to 1$ whenever $\alpha_k\to\infty$, which implies increasingly accurate eigenspace recovery as the signal strengthens.

The normalization effect can be further illustrated in the special case $\diag(\bSig)=\bI$ with a single simple spike $\alpha_1$. Then $\bG=\bGamma$ and $\bR=\bSig$, so covariance-based and correlation-based PCA share the same first-order target. Nevertheless, their asymptotic variances differ. In the Gaussian case $\nu_4=0$, the additional effect of normalization is determined by
\[
-\lim_{n\to\infty}
\left(
2\alpha_1 (\bV_1\circ \bV_1)^\top (\bV_1\circ \bV_1)
-
(\bV_1\circ \bV_1)^\top (\bR\circ\bR)(\bV_1\circ \bV_1)
\right).
\]
As noted by \citet{Morales-Jimenez2021}, even in special models the sign of this term may in principle be indefinite. Our general theory shows that the effect of normalization is governed jointly by signal strength and the geometry of the leading population direction. In particular, when the leading direction is sufficiently diffuse across coordinates, the additional normalization effect may be small; when the leading component is strong, normalization may even reduce the asymptotic variance.

These observations show that the effect of normalization in high-dimensional PCA is more subtle than the classical low-dimensional intuition. Rather than merely removing scale information, normalization changes the fluctuation structure of the sample spiked eigenstructure, with direct consequences for statistical inference.

\section{Statistical applications}\label{secapp}

We consider two applications of the asymptotic theory developed in Section~3: one-sample inference for a benchmark principal direction and two-sample inference for a leading spike. 

Before introducing the feasible procedures, we note that if $\nu_4\neq 0$, some second-order quantities in the limiting laws depend on the factor matrices $\bGamma$ and $\bG$, rather than only on $\bSig$ and $\bR$. Since a general factorization $\bSig=\bGamma\bGamma^\top$ or $\bR=\bG\bG^\top$ is not unique, these quantities are not identifiable from $\bSig$ or $\bR$ alone unless a specific representative is fixed. Throughout Section~4, we therefore adopt the canonical symmetric square-root convention
\[
\bGamma=\bSig^{1/2},\qquad \bG=\bR^{1/2},
\]
so that all plug-in quantities are well defined as functionals of $\bSig$ or $\bR$.

For implementation, the unknown centering and variance quantities are estimated by plug-in procedures constructed from the sample spikes, empirical bulk eigenvalues, and sample eigendirections. Their consistency is addressed in the supplementary material.

Roughly speaking, in the covariance case, consistency of the full plug-in variance estimator holds. In the correlation case, the only extra difficulty comes from the terms involving $\bG=\bR^{1/2}$ in the non-Gaussian regime $\nu_4\neq 0$. Thus, full consistency of the correlation-side variance estimator follows once the plug-in approximation to $\bR^{1/2}$ is suitably controlled. This holds under standard structural conditions such as sufficiently regular banded or rapidly decaying correlation matrices, which are often satisfied in statistical applications. In particular, when $\nu_4=0$, these terms vanish, so the correlation-side plug-in estimator is also consistent without any additional condition.

\subsection{One-sample inference for a benchmark principal direction}

We begin with the simple-spike case. Let $v_0\in\mathbb R^p$ be a prescribed unit vector. In the covariance and correlation settings, respectively, we consider
\[
H_{0,\bSig}:\ \bV_{\bSig,k}\bV_{\bSig,k}^\top=v_0v_0^\top,
\qquad
H_{0,\bR}:\ \bV_{\bR,k}\bV_{\bR,k}^\top=v_0v_0^\top,
\]
where $\bV_{\bSig,k}$ and $\bV_{\bR,k}$ are the population eigenvectors corresponding to the $k$th simple spike. Since eigenvectors are identifiable only up to sign, the hypotheses are formulated in terms of rank-one projectors.

To unify notation, define
\[
T_{\star,k}(v_0)=
\begin{cases}
\big(v_0^\top \widehat z_{\bSig,k}\big)^2, & \star=\bSig,\\[4pt]
\big(v_0^\top \widehat z_k\big)^2, & \star=\bR,
\end{cases}
\]
where $\star=\bSig$ and $\star=\bR$ correspond to covariance-based and correlation-based PCA, respectively. Let
\[
\mathcal L_{\star,0}(z)=\frac{\varphi_\star'(z)}{\psi_\star(z)}=\frac{z\varphi_\star'(z)}{\varphi_\star(z)},
\]
where $(\varphi_{\bSig},\psi_{\bSig})$ are defined through the covariance-side bulk law $H_\Sigma$, while $(\varphi_\bR,\psi_\bR)$ are defined through the correlation-side bulk law $H_R$. 

For notational convenience, write
\[
\widehat z_{\star,k}=
\begin{cases}
\widehat z_{\bSig,k}, & \star=\bSig,\\
\widehat z_k, & \star=\bR,
\end{cases}
\qquad
\lambda_{\star,j}=
\begin{cases}
\lambda_j(\bS), & \star=\bSig,\\
\lambda_j(\widehat{\bR}), & \star=\bR,
\end{cases}
\]
for $1\le j\le p$, and let $\alpha_{\star,k}$ denote the corresponding population spike in the $\star$-model.

Then, by Corollaries \ref{cor1eigvectorcltcov} and \ref{cor1eigvectorcltcorr}, under $H_{0,\star}$ we have
\[
\sqrt n\Big(T_{\star,k}(v_0)-\mathcal L_{\star,0}(\alpha_{\star,k})\Big)
\stackrel{D}{\longrightarrow}
N\Big(0,\sigma_{\star,k}^2\Big).
\]

In practice, the centering quantity $\mathcal L_{\star,0}(\alpha_{\star,k})$ is unknown because it depends on the spike magnitude and the bulk spectral distribution. For implementation, we estimate the centering term by
\[
\widehat{\mathcal L}_{\star,0,k}
=
-\frac{\widehat U_{\star,k}}
{\lambda_{\star,k}\widehat U_{\star,k}^{(1)}},
\]
where
\[
\widehat U_{\star,k}
=
-\frac{1-(p-K)/n}{\lambda_{\star,k}}
+\frac{1}{n}\sum_{j=K+1}^p\frac{1}{\lambda_{\star,j}-\lambda_{\star,k}},
\]
\[
\widehat U_{\star,k}^{(1)}
=
\frac{1-(p-K)/n}{\lambda_{\star,k}^2}
+\frac{1}{n}\sum_{j=K+1}^p\frac{1}{(\lambda_{\star,j}-\lambda_{\star,k})^2},
\]
and similarly
\[
\widehat U_{\star,k}^{(2)}
=
-\frac{2\{1-(p-K)/n\}}{\lambda_{\star,k}^3}
+\frac{2}{n}\sum_{j=K+1}^p\frac{1}{(\lambda_{\star,j}-\lambda_{\star,k})^3},
\]
\[
\widehat U_{\star,k}^{(3)}
=
\frac{6\{1-(p-K)/n\}}{\lambda_{\star,k}^4}
+\frac{6}{n}\sum_{j=K+1}^p\frac{1}{(\lambda_{\star,j}-\lambda_{\star,k})^4}.
\]
Define further
\[
\widehat c_{\star,k}
=
\frac{\widehat U_{\star,k}\widehat U_{\star,k}^{(2)}}
{\big(\widehat U_{\star,k}^{(1)}\big)^2}
-1
+\frac{\widehat U_{\star,k}}
{\lambda_{\star,k}\widehat U_{\star,k}^{(1)}},
\]
\[
\widehat A_{\star,k}
:=
\frac{\widehat U_{\star,k}^2}
{6\lambda_{\star,k}^4 \big(\widehat U_{\star,k}^{(1)}\big)^4}
\left\{
\lambda_{\star,k}^2\widehat U_{\star,k}^{(3)}
+6\lambda_{\star,k}\widehat U_{\star,k}^{(2)}
+6\widehat U_{\star,k}^{(1)}
\right\},
\]
\[
\widehat B_{\star,k}
:=
\frac{\widehat U_{\star,k}^2\big(\widehat c_{\star,k}\big)^2}
{\lambda_{\star,k}^2\big(\widehat U_{\star,k}^{(1)}\big)^2}.
\]

For plug-in estimation of the eigenvector-dependent quantities, we de-shrink the sample spiked eigenvector by the estimated alignment factor and define
\[
\widehat{\bv}_{\star,k}^{\,\mathrm{ds}}
:=
\frac{\widehat z_{\star,k}}{\sqrt{\widehat{\mathcal L}_{\star,0,k}}}.
\]
All Hadamard-type plug-in quantities are constructed from this de-shrunk vector.

For the covariance case, we define
\[
\widehat\kappa_{\bSig,k}
=
\bigl(
\widehat{\bv}_{\bSig,k}^{\,\mathrm{ds}}
\circ
\widehat{\bv}_{\bSig,k}^{\,\mathrm{ds}}
\bigr)^\top
\bigl(
\widehat{\bv}_{\bSig,k}^{\,\mathrm{ds}}
\circ
\widehat{\bv}_{\bSig,k}^{\,\mathrm{ds}}
\bigr),
\qquad
\widehat\rho_{\bSig,k}
=
\widehat\gamma_{\bSig,k}
=
\widehat\chi_{\bSig,k}
=
0.
\]

For the correlation case, we define
\[
\widehat\kappa_{\bR,k}
=
\bigl(
\widehat{\bv}_{\bR,k}^{\,\mathrm{ds}}
\circ
\widehat{\bv}_{\bR,k}^{\,\mathrm{ds}}
\bigr)^\top
\bigl(
\widehat{\bv}_{\bR,k}^{\,\mathrm{ds}}
\circ
\widehat{\bv}_{\bR,k}^{\,\mathrm{ds}}
\bigr),
\]
\[
\widehat\rho_{\bR,k}
=
\widehat\kappa_{\bR,k},
\]
\[
\widehat\gamma_{\bR,k}
=
\bigl(
\widehat{\bv}_{\bR,k}^{\,\mathrm{ds}}
\circ
\widehat{\bv}_{\bR,k}^{\,\mathrm{ds}}
\bigr)^\top
(\widehat\bR^{1/2}\circ \widehat\bR^{1/2})
\bigl(
\widehat{\bv}_{\bR,k}^{\,\mathrm{ds}}
\circ
\widehat{\bv}_{\bR,k}^{\,\mathrm{ds}}
\bigr),
\]
\[
\widehat\chi_{\bR,k}
=
\bigl(
\widehat{\bv}_{\bR,k}^{\,\mathrm{ds}}
\circ
\widehat{\bv}_{\bR,k}^{\,\mathrm{ds}}
\bigr)^\top
\widehat{\mathfrak R}_{\bR}^{\nu_4}
\bigl(
\widehat{\bv}_{\bR,k}^{\,\mathrm{ds}}
\circ
\widehat{\bv}_{\bR,k}^{\,\mathrm{ds}}
\bigr),
\]
where
\[
\widehat{\mathfrak R}_{\bR}^{\nu_4}
=
2(\widehat\bR\circ \widehat\bR)
+\nu_4\big[(\widehat\bR^{1/2}\circ \widehat\bR^{1/2})(\widehat\bR^{1/2}\circ \widehat\bR^{1/2})^\top\big].
\]
To unify notation, we write
\[
(\widehat\kappa_{\star,k},\widehat\rho_{\star,k},\widehat\gamma_{\star,k},\widehat\chi_{\star,k})
\]
for the corresponding plug-in quantities in the $\star$-model, where $\star=\bSig$ or $\bR$.

Using these quantities, define
\[
\widehat\sigma_{\lambda,\star,k}^2
=
\frac{2}{\lambda_{\star,k}^2\,\widehat U_{\star,k}^{(1)}}
+
\frac{
4\widehat U_{\star,k}\widehat\rho_{\star,k}
+
\widehat U_{\star,k}^{\,2}
\Big(
\nu_4\widehat\kappa_{\star,k}
-2\nu_4\widehat\gamma_{\star,k}
+\widehat\chi_{\star,k}
\Big)
}{
\lambda_{\star,k}^2\big(\widehat U_{\star,k}^{(1)}\big)^2
},
\]
\[
\widehat\sigma_{\star,k}^2
=
2\widehat A_{\star,k}
+
\widehat B_{\star,k}
\Big(
\nu_4\widehat\kappa_{\star,k}
+\frac{4\widehat\rho_{\star,k}}{\widehat U_{\star,k}}
-2\nu_4\widehat\gamma_{\star,k}
+\widehat\chi_{\star,k}
\Big),
\]
\[
\widehat\eta_{\star,k}
=
-\frac{
\widehat U_{\star,k}\widehat U_{\star,k}^{(2)}
}{
\lambda_{\star,k}^2\big(\widehat U_{\star,k}^{(1)}\big)^3
}
-
\frac{
2\widehat U_{\star,k}
}{
\lambda_{\star,k}^3\big(\widehat U_{\star,k}^{(1)}\big)^2
}
+
\frac{
\widehat U_{\star,k}^{\,2}\widehat c_{\star,k}
}{
\lambda_{\star,k}^2\big(\widehat U_{\star,k}^{(1)}\big)^2
}
\left(
-\frac{4\widehat\rho_{\star,k}}{\widehat U_{\star,k}}
+2\nu_4\widehat\gamma_{\star,k}
-\nu_4\widehat\kappa_{\star,k}
-\widehat\chi_{\star,k}
\right),
\]
and
\[
\widehat{\widetilde\sigma}_{\star,k}^2
=
\widehat\sigma_{\star,k}^2
+
\widehat c_{\star,k}^{\,2}\widehat\sigma_{\lambda,\star,k}^2
+
2\widehat c_{\star,k}\widehat\eta_{\star,k}.
\]

The explicit derivation of these estimators and their population counterparts is deferred to the supplementary material. If $\widehat{\widetilde\sigma}_{\star,k}^2$ is consistent for the corresponding asymptotic variance, then
\[
Z_{\star,k}^{(1)}
:=\frac{
\sqrt n\Big(T_{\star,k}(v_0)-\widehat{\mathcal L}_{\star,0,k}\Big)
}{
\widehat{\widetilde\sigma}_{\star,k}
}
\overset{D}{\longrightarrow}
N(0,1).
\]

Therefore, an asymptotic level-$\beta$ two-sided test rejects $H_0$ whenever
\[
|Z_{\star,k}^{(1)}|>z_{1-\beta/2},
\]
where $z_{1-\beta/2}$ denotes the $(1-\beta/2)$ quantile of the standard normal distribution.

More generally, for a prescribed unit vector $\mathcal P$, one may consider the projector-type statistics
\[
T_{\bSig,k}(\mathcal P)=\mathcal P^\top \widehat\Pi_{\bSig,k}\mathcal P,
\qquad
T_{\bR,k}(\mathcal P)=\mathcal P^\top \widehat\Pi_k\mathcal P.
\]
The same inferential principle continues to apply, although the explicit corrected variances become more cumbersome.

\subsection{Two-sample inference for a spiked eigenvalue}

We next consider a two-sample problem. Let $n_1$ and $n_2$ be two sample sizes, and assume that
\[
\frac{p}{n_i}\to y_i\in(0,\infty),\qquad i=1,2,
\]
so that both samples are of the same high-dimensional order as the dimension $p$. For $i=1,2$, let $\lambda_{i,\star,k}$ denote the $k$th sample spiked eigenvalue from the $i$th sample under the $\star$-model, where $\star=\bSig$ or $\star=\bR$, and let $\alpha_{i,\star,k}$ be the corresponding population spike. We consider the null hypothesis
\[
H_0:\ \alpha_{1,\star,k}=\alpha_{2,\star,k}.
\]

Here we focus on two-sample inference for spiked eigenvalues rather than for spiked eigenvectors. The reason is that, in the generic high-dimensional regime, the sample eigenvector still has a non-negligible angle from its population counterpart unless the spike is sufficiently strong. Consequently, a direct two-sample comparison of sample eigenvectors is typically dominated by random overlap between the corresponding estimation errors, making it difficult to construct a generally stable and explicit test statistic. By contrast, the spiked eigenvalue admits a cleaner two-sample Wald-type comparison.

This problem is particularly interpretable in the correlation setting. Since the diagonal entries of the population correlation matrix are all equal to one, the spikes of $\bR$ directly quantify the relative contribution of the leading factors to the overall correlation structure. Thus, in the correlation case, testing
\[
H_0:\ \alpha_{1,\bR,k}=\alpha_{2,\bR,k}
\]
amounts to testing whether the $k$th principal factor has the same contribution across the two populations.

For each sample $i=1,2$, by the one-sample eigenvalue CLT,
\[
\sqrt{n_i}\left(
\frac{\lambda_{i,\star,k}}{\varphi_\star(\alpha_{i,\star,k})}-1
\right)
\overset{D}{\longrightarrow}
N\bigl(0,\sigma_{\lambda,i,\star,k}^2\bigr),
\]
where $\sigma_{\lambda,i,\star,k}^2$ is defined as in the one-sample case, with all quantities computed from the $i$th sample.

To compare the population spikes directly, let
\[
\widehat\alpha_{i,\star,k}
=
-\frac{1}{\widehat U_{i,\star,k}},
\qquad i=1,2,
\]
where $\widehat U_{i,\star,k}$ is defined exactly as in the one-sample case, but with $n$, $\lambda_{\star,k}$, and the associated bulk eigenvalues replaced by $n_i$, $\lambda_{i,\star,k}$, and the bulk eigenvalues from the $i$th sample, respectively. Likewise, let $\widehat U_{i,\star,k}^{(1)}$ and $\widehat\sigma_{\lambda,i,\star,k}^2$ denote the corresponding one-sample plug-in quantities from sample $i$.

We estimate $\varphi_\star'(\alpha_{i,\star,k})$ by
\[
\widehat\varphi_{i,\star,k}'
=
\frac{1}{\widehat\alpha_{i,\star,k}^{\,2}\,\widehat U_{i,\star,k}^{(1)}}
=
\frac{\widehat U_{i,\star,k}^{\,2}}{\widehat U_{i,\star,k}^{(1)}}.
\]
Then, by the delta method,
\[
\sqrt{n_i}\bigl(
\widehat\alpha_{i,\star,k}-\alpha_{i,\star,k}
\bigr)
\overset{D}{\longrightarrow}
N\bigl(0,\tau_{i,\star,k}^2\bigr),
\]
where
\[
\tau_{i,\star,k}^2
=
\frac{\varphi_\star(\alpha_{i,\star,k})^2}
{\varphi_\star'(\alpha_{i,\star,k})^2}\,
\sigma_{\lambda,i,\star,k}^2.
\]
Accordingly, we define the plug-in estimator
\[
\widehat\tau_{i,\star,k}^2
=
\frac{\lambda_{i,\star,k}^2}
{\widehat\varphi_{i,\star,k}'^{\,2}}\,
\widehat\sigma_{\lambda,i,\star,k}^2.
\]

Under the same regularity conditions ensuring the consistency of the one-sample plug-in estimators, $\widehat\tau_{i,\star,k}^2$ is consistent for $\tau_{i,\star,k}^2$. Since the two samples are independent, under $H_0$ we have
\[
Z_{\star,k}^{(2)}
:=
\frac{
\widehat\alpha_{1,\star,k}-\widehat\alpha_{2,\star,k}
}{
\sqrt{
\widehat\tau_{1,\star,k}^2/n_1
+
\widehat\tau_{2,\star,k}^2/n_2
}
}
\overset{D}{\longrightarrow}
N(0,1).
\]

Therefore, an asymptotic level-$\beta$ two-sided test rejects $H_0$ whenever
\[
|Z_{\star,k}^{(2)}|>z_{1-\beta/2},
\]
where $z_{1-\beta/2}$ denotes the $(1-\beta/2)$ quantile of the standard normal distribution.

\section{Numerical studies}\label{secsimu}

This section evaluates the finite-sample performance of the inferential procedures developed in Section~4, including one-sample benchmark-direction testing and two-sample spike comparison. Unless otherwise stated, the nominal level is fixed at $\beta=0.05$, and each rejection probability is estimated from $10{,}000$ Monte Carlo replications.

In all simulations, the data matrix is generated as $
\bY=\bM^{1/2} \bX,
$ where the entries of $\bX$ are i.i.d.\ with mean zero and variance one. We consider two innovation distributions: the standard Gaussian law and the centered Gamma law Gamma$(4,0.5)-2$, whose variance equals one and whose excess kurtosis is $1.5$. These designs are chosen to reflect two representative scenarios for the bulk structure: a relatively simple homogeneous dependence pattern and a more heterogeneous bulk-plus-spike configuration. Together with the Gaussian and non-Gaussian innovations, they provide a basic check of the robustness of the proposed procedures across model settings.

We consider two representative population designs. In the equicorrelated design, the population matrix takes the form
$
\bM=(1-\rho_0)\bI+\rho_0\mathbf1\mathbf1^\top,
$
and in the Toeplitz-type design,
$
\bM=\bB_{\rm bulk}+\theta uu^\top,
\qquad
\bB_{\rm bulk}=(a^{|i-j|})_{1\le i,j\le p},
$
where $u$ is a deterministic unit vector with exponentially decaying coordinates. In the covariance model we set $\bSig=\bM$, whereas in the correlation model we standardize $\bM$ to unit diagonal and use the corresponding population correlation matrix
$
\bR=\diag^{-1/2}(\bM)\bM\diag^{-1/2}(\bM).
$
To make the results comparable across dimensional settings, we calibrate the leading population spike to remain at a common target level as $p$ varies. Thus the reported changes in empirical size mainly reflect finite-sample behavior rather than systematic changes in signal strength. For simplicity and interpretability, all simulations in this section are conducted under the single-spike setting $K=1$.

\subsection{One-sample inference for a benchmark principal direction}

We first study the one-sample benchmark-direction test. We consider three representative dimensional settings,
$
(p,n)\in\{(60,120),(120,240),(240,480)\},
$
under both the equicorrelated and Toeplitz-type designs, and under both Gaussian and Gamma$(4,0.5)-2$ innovations. For the power study, we fix $(p,n)=(120,240)$ and generate alternatives by rotating the benchmark direction away from the true principal direction according to the deviation parameter in the simulation algorithm.

Table~\ref{tab:onesample-size} reports empirical size under the null, and Figure~\ref{fig:onesample-power} shows the corresponding power curves. The covariance-based procedure is well calibrated, whereas the correlation-based procedure shows larger finite-sample deviations in smaller samples but improves as $(p,n)$ increases.

\begin{table}[htbp]
\centering
\caption{Empirical size of the one-sample benchmark-direction test at the nominal $5\%$ level.}
\label{tab:onesample-size}
\resizebox{\textwidth}{!}{%
\begin{tabular}{llcccccc}
\hline
&  \multicolumn{1}{c}{$(p,n)$}&\multicolumn{2}{c}{$(60,120)$} & \multicolumn{2}{c}{$(120,240)$} & \multicolumn{2}{c}{$(240,480)$} \\
\cline{3-8}
Design & Dist. & Cov & Corr & Cov & Corr & Cov & Corr \\
\hline
Equicorr & Gaussian & 0.051 & 0.077 & 0.047 & 0.064 & 0.050 & 0.057 \\
         & Gamma   & 0.063 & 0.074 & 0.061 & 0.069 & 0.051 & 0.054 \\
Toeplitz & Gaussian & 0.064 & 0.023 & 0.053 & 0.034 & 0.051 & 0.043 \\
         & Gamma   & 0.068 & 0.025 & 0.055 & 0.032 & 0.056 & 0.041 \\
\hline
\end{tabular}%
}
\end{table}

\begin{figure}[htbp]
\centering
\includegraphics[width=.85\textwidth]{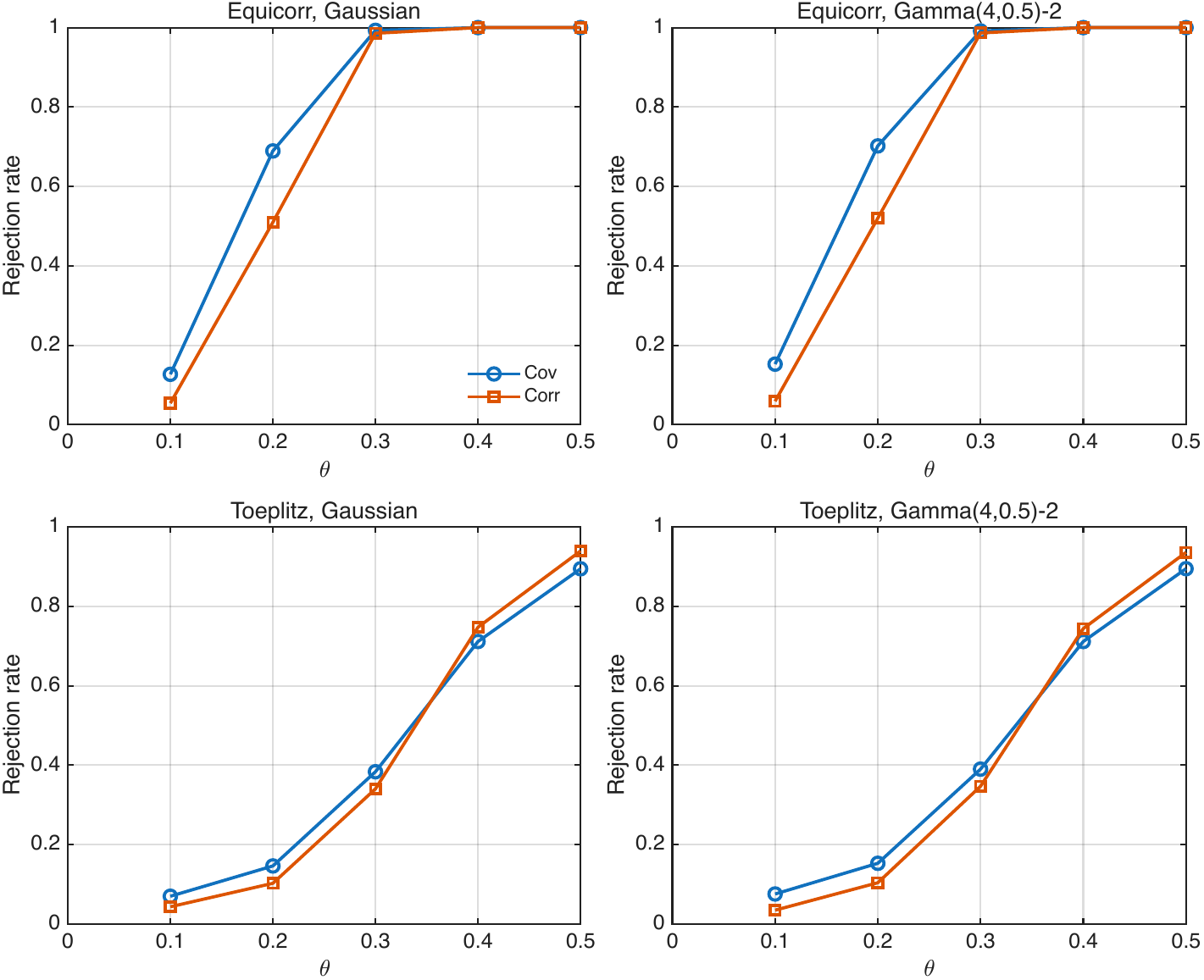}
\caption{Power curves of the one-sample benchmark-direction test for $(p,n)=(120,240)$ under the equicorrelated and Toeplitz-type designs.}
\label{fig:onesample-power}
\end{figure}

\subsection{Two-sample inference for a spiked eigenvalue}

We next study the two-sample spike test. For null calibration, we consider three representative settings,
\[
(p,n_1,n_2)\in\{(60,100,140),(120,200,280),(240,400,560)\},
\]
so that both sample sizes remain of the same order as $p$ but are not identical. The population designs and innovation distributions are taken to be the same as in the one-sample experiments, and the leading spikes under the null are calibrated in the same way across dimensional settings. For the power study, we focus on the representative configuration $(p,n_1,n_2)=(120,200,280)$.

The alternatives are generated by multiplying the second population spike by a factor in
\[
\{1.00,1.10,1.20,1.30,1.40,1.50,1.60,1.70,1.80\},
\]
while keeping the first population fixed. In the power analysis, we emphasize the correlation model.

Table~\ref{tab:twosample-size} reports empirical size under the null, and Figure~\ref{fig:twosample-power} shows the corresponding power curves. The covariance-based procedure is well calibrated across the reported settings, whereas the correlation-based version is less stable in smaller samples but approaches the nominal level as the sample sizes increase.

\begin{table}[htbp]
\centering
\caption{Empirical size of the two-sample spike test at the nominal $5\%$ level.}
\label{tab:twosample-size}
\resizebox{\textwidth}{!}{%
\begin{tabular}{llcccccc}
\hline
&  \multicolumn{1}{c}{$(p,n_1,n_2)$}& \multicolumn{2}{c}{$(60,100,140)$} & \multicolumn{2}{c}{$(120,200,280)$} & \multicolumn{2}{c}{$(240,400,560)$} \\
\cline{3-8}
Design & Dist. & Cov & Corr & Cov & Corr & Cov & Corr \\
\hline
Equicorr & Gaussian & 0.044 & 0.064 & 0.049 & 0.056 & 0.051 & 0.055 \\
         & Gamma   & 0.046 & 0.065 & 0.047 & 0.056 & 0.046 & 0.050 \\
Toeplitz & Gaussian & 0.039 & 0.071 & 0.049 & 0.065 & 0.047 & 0.051 \\
         & Gamma   & 0.039 & 0.073 & 0.044 & 0.060 & 0.047 & 0.058 \\
\hline
\end{tabular}%
}
\end{table}

\begin{figure}[htbp]
\centering
\includegraphics[width=.85\textwidth]{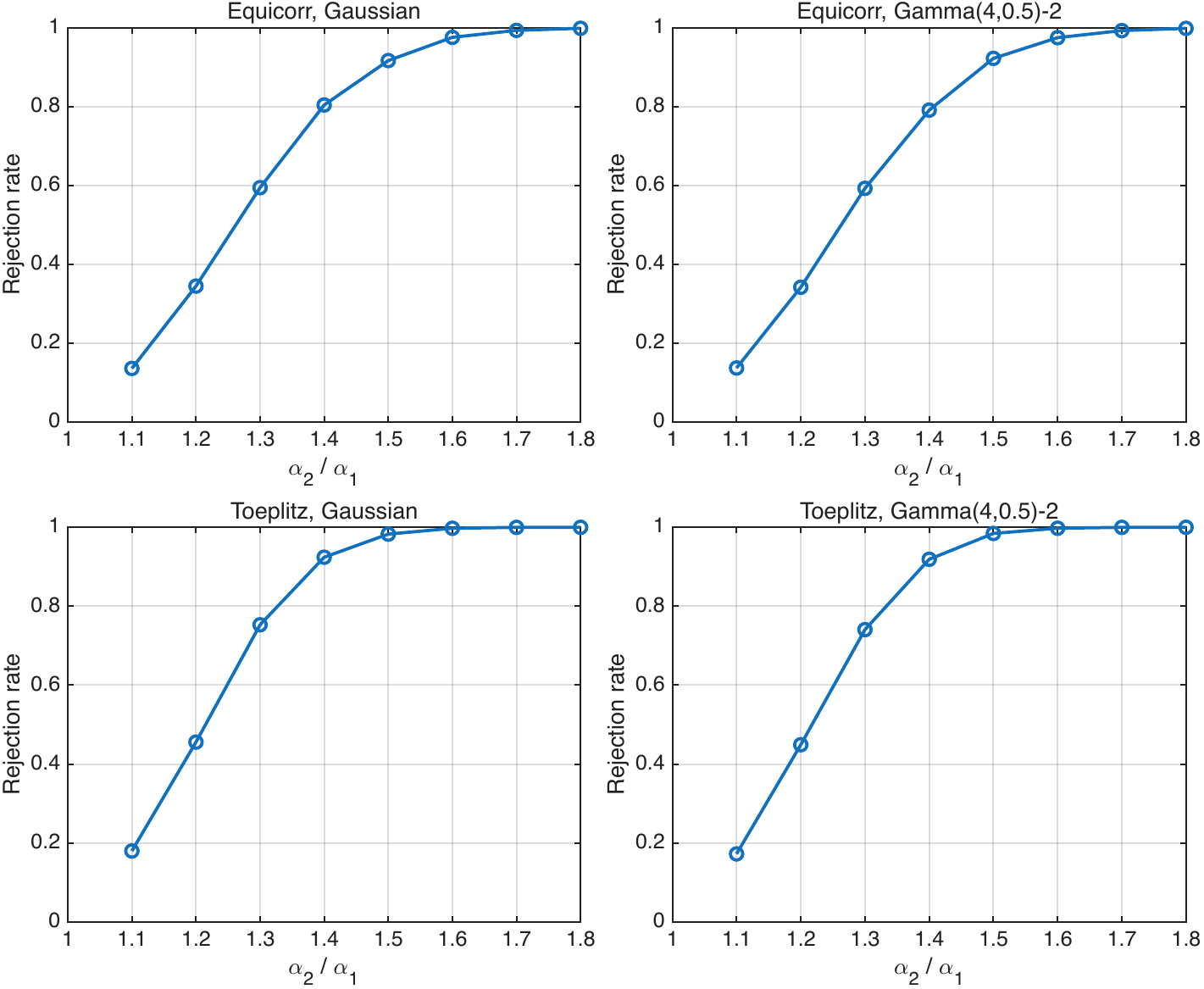}
\caption{Power curves of the two-sample spike test in the correlation model for $(p,n_1,n_2)=(120,200,280)$ under the equicorrelated and Toeplitz-type designs.}
\label{fig:twosample-power}
\end{figure}

\subsection{Real-data illustration}

We finally illustrate the proposed procedures on a publicly available panel of U.S.\ stock prices. Starting from a public five-year daily price dataset for a large set of S\&P 500 constituent stocks, we retain only those stocks with full trading-day coverage over the period from February 8, 2013 to February 7, 2018. We then rank the remaining stocks by average dollar trading volume and select the top 100 stocks to form a balanced high-dimensional panel. Daily close-to-close simple returns are computed from the adjusted closing prices.

This yields a $p\times n$ return matrix with $p=100$ and $n=1258$. To reduce the effect of heterogeneous marginal volatilities, we take correlation-based PCA as the primary analysis and report covariance-based results for comparison. This dataset provides a natural illustration of our methodology because equity returns typically contain a strong market-driven factor structure together with heterogeneous volatilities across assets. In such a setting, both principal-direction inference and comparison of leading spike strengths are statistically meaningful.

We consider two empirical questions that parallel the two inferential problems studied in Section~4. First, for the one-sample benchmark-direction problem, we test whether the leading population principal direction is consistent with the equally weighted market direction
$
v_0=p^{-1/2}\mathbf 1.
$
Second, for the two-sample spike comparison problem, we split the sample into two subperiods,
\[
\text{Period 1: February 11, 2013 -- July 31, 2015},\
\text{Period 2: August 3, 2015 -- February 7, 2018},
\]
with sample sizes
\[
n_1=623,\qquad n_2=635,
\]
and test
\[
H_0:\ \alpha_{1,\star,1}=\alpha_{2,\star,1},
\]
where $\star=\bSig$ or $\bR$.

The empirical findings are summarized in Table~\ref{tab:realdata}. For the one-sample benchmark-direction test, both covariance-based and correlation-based PCA strongly reject the null that the leading population principal direction coincides with the equally weighted market direction. Numerically, the estimated leading sample direction is still close to the benchmark, with
$
(v_0^\top \widehat z_1)^2=0.9685
$
for correlation PCA and
$
(v_0^\top \widehat z_{\bSig,1})^2=0.9104
$
for covariance PCA, but the deviations are statistically significant. This suggests that the dominant dependence direction is not simply the uniform market mode.

For the two-sample spike comparison, the conclusions differ between covariance-based and correlation-based PCA. In the correlation setting, the estimated leading spikes are
\[
\widehat\alpha_{1,\bR,1}=33.1178,\qquad
\widehat\alpha_{2,\bR,1}=33.3315,
\]
and the resulting test statistic is
\[
Z_{\bR,1}^{(2)}=-0.1282,
\]
with $p$-value $0.8980$, so the null is not rejected. By contrast, in the covariance setting,
\[
\widehat\alpha_{1,\bSig,1}=0.006631,\qquad
\widehat\alpha_{2,\bSig,1}=0.008496,
\]
and the test statistic is
\[
Z_{\bSig,1}^{(2)}=-3.0715,
\]
with $p$-value $0.0021$, leading to rejection of the null.

This illustration shows how the proposed procedures distinguish between changes in covariance-based and correlation-based factor strength in a real financial panel. These findings are consistent with the view that covariance-based PCA is more sensitive to changes in overall scale and volatility, whereas correlation-based PCA targets the standardized dependence structure more directly. In this dataset, the strength of the leading covariance factor changes significantly across the two periods, while the leading correlation factor remains comparatively stable.

\begin{table}[htbp]
\centering
\caption{Summary of the real-data illustration.}
\label{tab:realdata}
\small
\renewcommand{\arraystretch}{1.15}
\setlength{\tabcolsep}{4pt}
\begin{tabular}{llccc}
\hline
Method & Sample & Spike estimate & Benchmark test & Spike test \\
\hline
Corr-PCA 
& Full sample 
& $32.8178$ 
& reject ($p<10^{-16}$) 
& --- \\

Corr-PCA 
& Period 1 / Period 2 
& $33.1178,\ 33.3315$ 
& --- 
& no diff. ($p=0.8980$) \\

Cov-PCA  
& Full sample 
& $0.007494$ 
& reject ($p<10^{-16}$) 
& --- \\

Cov-PCA  
& Period 1 / Period 2 
& $0.006631,\ 0.008496$ 
& --- 
& diff. ($p=0.0021$) \\
\hline
\end{tabular}

\vspace{0.3em}
\begin{minipage}{0.92\textwidth}
\footnotesize
Benchmark test = one-sample benchmark-direction test; 
Spike test = two-sample spike comparison test.
\end{minipage}
\end{table}

\section{Discussion}\label{secdis}
In this paper, we developed a unified inferential framework for two statistical problems in high-dimensional PCA: inference for principal eigenspace functionals and comparison of leading spike strengths across populations. Under a generalized spiked model, we established the corresponding asymptotic theory for both the sample covariance matrix $\bS$ and the sample correlation matrix $\widehat{\bR}$.

On the covariance side, our results extend existing inferential theory beyond the identity-bulk setting. In particular, we derive asymptotic distributions for projector-type functionals of sample spiked eigenspaces and obtain feasible procedures for leading spike comparison.

On the correlation side, our analysis shows that normalization has a genuinely nontrivial second-order effect on high-dimensional PCA. Although the first-order limits of the spiked eigenstructure are still governed by the population correlation matrix, the second-order fluctuations differ essentially from those in the covariance setting. This distinction affects both spiked eigenvalues and eigenspace inference, showing that the choice between covariance- and correlation-based PCA has direct consequences for second-order statistical inference.

A key technical contribution is a joint central limit theorem for bilinear forms involving the resolvent of the sample covariance matrix and its diagonal entries. This result provides a unified route to the asymptotic analysis of both $\bS$ and $\widehat{\bR}$ and makes the correlation-side theory tractable under generalized bulk structure. It may also be useful in other random matrix problems involving random normalization or diagonal adjustment.

The present work suggests several directions for future research. One is to develop a more systematic calibration theory for correlation-based inference in non-Gaussian settings, where normalization introduces additional nuisance quantities. Another is to extend the inferential analysis to weaker or near-critical spikes, thereby clarifying the boundary between detectability and reliable eigenspace recovery. It would also be of interest to apply the projector-type perspective developed here to related problems such as eigenspace comparison, change-point analysis for principal subspaces, and factor-space inference in high-dimensional latent structure models.

Overall, the results of this paper show that spiked eigenstructure inference provides a natural perspective on high-dimensional PCA. By treating covariance-based and correlation-based PCA within a common generalized spiked framework, we obtain a clearer understanding of how signal strength, bulk structure, and normalization jointly determine the asymptotic behavior of sample principal components and the resulting statistical procedures.

\section*{Supplementary Material}

The supplementary material, entitled
\textit{Supplement to ``Inference for Spiked Eigenstructure under Generalized Covariance and Correlation Models''},
contains technical proofs, auxiliary lemmas, explicit variance formulas, and supporting results for the plug-in procedures.


\end{document}